\renewcommand{\labelenumi}{$\mathrm{({\roman{enumi}})}$}
\renewcommand{\emptyset}{\varnothing}
\newcommand{\Sii}{{\Sigma_1^1}}
\newcommand{\Dii}{{\Delta_1^1}}
\newcommand{\Pii}{{\Pi_1^1}}
\renewcommand{\a}{\alpha}
\renewcommand{\b}{\beta}
\newcommand{\g}{\gamma}
\newcommand{\e}{\varepsilon}
\newcommand{\f}{\varphi}
\renewcommand{\k}{\kappa}
\renewcommand{\l}{\lambda}
\renewcommand{\o}{\omega}
\renewcommand{\Cup}{\bigcup}
\newcommand{\WC}{{\operatorname{WC}}}
\newcommand{\DLO}{{\operatorname{DLO}}}
\newcommand{\CLUB}{{\operatorname{CLUB}}}
\newcommand{\NS}{\operatorname{NS}} 
\newcommand{\reg}{\operatorname{reg}}
\newcommand{\Mod}{\operatorname{Mod}}
\newcommand{\dom}{\operatorname{dom}}
\newcommand{\Sk}{\operatorname{Sk}} 
\newcommand{\ZF}{\operatorname{ZF}} 
\newcommand{\cf}{\operatorname{cf}} 
\newcommand{\ran}{\operatorname{ran}}
\newcommand{\Right}{{\operatorname{Right}}}
\newcommand{\Left}{{\operatorname{Left}}}
\newcommand{\Borel}{\operatorname{Borel}} 
\newcommand{\la}{\langle} 
\newcommand{\ra}{\rangle}
\newcommand{\ON}{\mathbb{ON}}
\newcommand{\qo}{\sqsubseteq}
\newcommand{\A}{\mathcal{A}}
\newcommand{\F}{\mathcal{F}}
\renewcommand{\L}{\mathcal{L}}
\newcommand{\G}{\mathcal{G}}
\newcommand{\es}{\varnothing}
\newcommand{\rest}{\restriction}
\renewcommand{\le}{\leqslant}
\renewcommand{\ge}{\geqslant}
\date{}
\newtheorem{theorem}{Theorem}[section]
\newtheorem{corollary}[theorem]{Corollary}
\newtheorem{lemma}[theorem]{Lemma}
\newtheorem{claim}{Claim}[theorem]
\newtheorem*{claimm}{Claim}
\theoremstyle{definition}
\newtheorem{definition}[theorem]{Definition}
\newtheorem{fact}[theorem]{Fact}
\newtheorem*{xrem}{Remark}
\numberwithin{equation}{section}
\newcommand{\proofvpara}{\text{}}
\title{On $\Sigma_1^1$-completeness of quasi-orders on~$\k^\k$}
\author{Tapani Hyttinen\\ University of Helsinki\\ \\Vadim Kulikov\\ University of Helsinki\\ Aalto University\\ \\Miguel Moreno\\ University of Helsinki\\ Bar-Ilan University}
\begin{document}
\maketitle
\begin{abstract}
  We prove under $V=L$ that the inclusion modulo the non-stationary
  ideal is a $\Sii$-complete quasi-order in the generalized
  Borel-reducibility hierarchy ($\k>\o$). This improvement to known
  results in $L$ has many new consequences concerning the
  $\Sii$-completeness of quasi-orders and equivalence relations such
  as the embeddability of dense linear orders as well as the
  equivalence modulo various versions of the non-stationary
  ideal. This serves as a partial or complete answer to several open
  problems stated in literature. Additionally the theorem is applied
  to prove a dichotomy in $L$: If the isomorphism of a countable
  first-order theory (not necessarily complete) is not $\Dii$, then it
  is $\Sii$-complete.
  
  We also study the case $V\ne L$ and prove $\Sii$-completeness
  results for weakly ineffable and weakly compact~$\k$.
\end{abstract}

\subsection*{Acknowledgments}

The second author wishes to thank the Academy of Finland for the
support through its grant number 285203 as well as both Aalto
University and University of Helsinki for providing suitable research
environment during the academic year 2018-2019.  

The third author wishes to thank the University of Helsinki and its
Doctoral Programme in Mathematics and Statistics (DOMAST) for support.
He also thank Bar-Ilan University and Assaf Rinot for the support
provided by the European Research Council through its grant agreement
ERC-2018-StG~802756.

\section{Introduction}

We work in the setting of generalized descriptive set
theory~\cite{FHK}, GDST for short. The spaces
$\k^\k=\{f\colon \k\to\k\}$ and $2^\k=\{f\colon \k\to 2\}$ are
equipped with the bounded topology where the basic open sets are of
the form $\{\eta\in \k^\k\mid \eta\supset p\}$, $p\in\k^{<\k}$.  Borel
sets are generated by $\k$-long unions and intersection of basic open
sets. Notions of Borel-reducibility between equivalence relations and
quasi-orders as well as Wadge-reducibility between sets are
generalized accordingly. A set is $\Sii$ if it is the projection of a
Borel set, see next section for more detailed definitions.

In \cite{FHK} a Lemma was introduced (a version of the Lemma and a
detailed proof can be found in~\cite[Lemma~1.9 \& Remark~1.10]{HK})
saying that if $V=L$, then any $\Sii$ subset of $\k^\k$ can be
Wadge-reduced to
$$\CLUB=\{\eta\in 2^\k\mid \eta^{-1}\{1\}\text{ contains a }\mu\text{-club}\},\quad\mu<\k\text{ regular,}$$
where ``$\mu$-club'' is short for unbounded set closed under
increasing sequences of length~$\mu$.  In \cite{FHK} this was used to
show that if $V=L$, then $\Sii=\Borel^{*}$.  In \cite{HK} the
Wadge-reducibility result was strengthened by the first two authors of
the present paper.  It was shown (still in~$L$) that every
$\Sii$-equivalence relation is Borel-reducible to the following
equivalence relation on~$\k^\k$:
\begin{equation}
  E^\k_\mu=\{(\eta,\xi)\in (\k^\k)^2\mid \{\a<\k\mid \eta(\a)=\xi(\a)\}\text{ contains a }\mu\text{-club}\}.\label{eq:first}
\end{equation}
We say that $E^\k_\mu$ is $\Sii$-\emph{complete}.

This result was important, but we would have wanted to prove a stronger result, namely that the same equivalence relation on $2^\k$ is $\Sii$-complete:
\begin{equation}
  E^2_\mu=\{(\eta,\xi)\in (2^\k)^2\mid \{\a<\k\mid \eta(\a)=\xi(\a)\}\text{ contains a }\mu\text{-club}\}.\label{eq:second}
\end{equation}
The reason for this was that we
knew many more equivalence relations to which $E^{2}_{\mu}$ can be
Borel-reduced than equivalence relations to which $E^{\k}_{\mu}$ can
be Borel-reduced.  The corollaries of \eqref{eq:first} and
\eqref{eq:second} were explored in~\cite{FHK,HK,MOR}.  In particular,
the question ``Is $E^\k_{\mu}$ Borel-reducible to $E^2_{\mu}$?'' that was stated
in~\cite[Q.~15]{FHK15} and re-stated in~\cite[Q.~3.46]{KLLS} was open
(and it is still open in the general case).  Of course if
$E^{2}_{\mu}$ is $\Sii$-complete the answer to this question is
positive and in the present paper we show that this is the case in~$L$
(Theorem~\ref{thm:maincor}) by first proving a corresponding result for
quasi-orders (Theorem~\ref{thm:QOV=L}). Borel-reducibility between
quasi-orders is a natural generalization of reducibility between
equivalence relations (see Section~\ref{sec:Preliminaries} for precise
definitions).

We then prove a range of new results which are all consequences of
Theorem~\ref{thm:QOV=L}. One of these is our main result: If $V=L$,
then the isomorphism relation of \emph{any} countable first-order
theory (not necessarily complete) is either $\Dii$ or $\Sii$-complete.
A closely related classification problem in the generalized Baire
space was studied in \cite{HKM}, the so-called ``Borel-reducibility
counterpart of the Shelah's main gap theorem''. The other results of
this paper are partial answers to \cite[Q.'s~11.3~and~11.4]{Luc}
(which are re-stated as \cite[Q's~3.49~and~3.50]{KLLS}),
\cite[Q.~15]{FHK15} and a complete answer to~\cite[Q.~3.47]{KLLS}.

These questions ask about the (consistency of) reducibility between
relations of the form $E^{\l}_{\mu}$, $\l\in\{2,\k\}$,
$\mu\in\reg(\k)$, quasi-orders arising as subset relations modulo
certain ideals (like the $\mu$-non-stationary ideal), quasi-orders of
embeddability between linear orders as well as various isomorphism
relations. In particular, \cite[Q.~11.4]{Luc} asks whether the
embeddability of dense linear orders $\qo_{\DLO}$ is a $\Sii$-complete
quasi-order for weakly compact~$\k$. From those results that are
described above it follows that $\qo_{\DLO}$ is $\Sii$-complete in $L$
for all $\k$ that are not successors of an $\o$-cofinal cardinal
(Theorem~\ref{thm:DLOVisLSiiCompl}). Since $\qo_{\DLO}$ is
Borel-reducible to $\qo_G$, the embeddability of graphs, this
quasi-order is also $\Sii$-complete in this scenario.  In
Section~\ref{ssec:Ineffable} we extend this to weakly ineffable
cardinals (without the assumption $V=L$).  Thus the only case in which
\cite[Q.~11.4]{Luc} is still open is the case when $V\ne L$ and $\k$
is a weakly compact cardinal which is not weakly ineffable. In
Section~\ref{ssec:SiiCompIsoDLO} we prove that the isomorphism of DLO,
$\cong_{\DLO}$, on $\k$ weakly compact is $\Sii$-complete (here again,
we do not assume $V=L$) and this implies the same for $\cong_G$, the
isomorphism of graphs. The existence of $\Sii$-complete isomorphism
relations has been previously known to hold in $L$ \cite{HK18}.  It is
still unknown whether there exists a model of ZFC and $\k>\o$ on which
no isomorphism relation on models of size $\k$ is $\Sii$-complete (a
stark contrast to the case $\k=\o$ where the isomorphism relation on
any class of countable structures is induced by a Polish group action
and therefore not
$\Sii$-complete~\cite{kechris1997classification}). Given the present
situation such a counterexample will have to satisfy both $V\ne L$ and
$\k$ is not weakly compact.

%%%%%%%%%%%%%%%%%%%%%%%%%%%%%

\section{Preliminaries and Definitions}
\label{sec:Preliminaries}

In this section we define the notions and concepts we work with.
Throughout this article we assume that $\kappa$ is an uncountable
cardinal that satisfies $\k^{<\k}=\k$ which is a standard assumption
in the GDST. In this paper, however, this assumption is mostly redundant,
because we work either with strongly inaccessible $\k$ or under the
assumption $V=L$. For sets $X$ and $Y$ denote by $X^Y$ the set of all
functions from $Y$ to $X$.  For ordinal $\a$ denote by $X^{<\a}$ the
set of all functions from any $\beta <\a$ to~$X$.  We work with the
generalized Baire and Cantor spaces associated with $\kappa$ these
being $\k^\k$ and $2^\k$ respectively, where $2=\{0,1\}$. The
generalized Baire space $\kappa^\k$ is equipped with the bounded
topology. For every $\zeta\in \kappa^{<\kappa}$, the set
$$\{\eta\in \kappa^\kappa \mid \zeta\subset \eta\}$$ 
is a basic open set. The open sets are of the form $\bigcup X$ where
$X$ is a collection of basic open sets. The collection of
\emph{$\k$-Borel} subsets of $\kappa^\kappa$ is the smallest set which
contains the basic open sets and is closed under unions and
intersections, both of length $\kappa$.  A \emph{$\k$-Borel set} is
any element of this collection.  In this paper we do not consider any
other kind of Borel sets, so we always omit the prefix~\mbox{``$\kappa$-''}.
The subspace $2^\k\subset \k^\k$ (the generalized Cantor space) is
equipped with the subspace topology. We will also work in the
subspaces of the form $\Mod^\k_T$ which are sets of codes for models
with domain $\k$ of a first-order countable theory~$T$. Special cases
include $\Mod^\k_G$ and $\Mod^\k_{\DLO}$ for
graphs and dense linear orders respectively.  These are Borel
subspaces of $2^\k$. This enables us to view the quasi-order of
embeddability of models, say $\qo_{\DLO}$, as a quasi-order on~$2^\k$.
In order to precisely define this, we have to introduce some notions.

The following is a standard way to code structures with domain
$\kappa$ by elements of $\k^\kappa$ (see e.g.~\cite{FHK}). Suppose
$\L=\{P_n\mid n<\omega\}$ is a countable relational
vocabulary.

\begin{definition}\label{def:coding}
  Fix a bijection $\pi\colon \kappa^{<\omega}\to \kappa$. For every
  $\eta\in 2^\kappa$ define the $\L$-structure
  $\A_\eta$ with domain $\kappa$ as follows: For every
  relation $P_m$ with arity $n$, every tuple $(a_1,a_2,\ldots, a_n)$
  in $\kappa^n$ satisfies
  $$(a_1,\ldots, a_n)\in P_m^{\A_\eta}\Longleftrightarrow \eta(\pi(m,a_1,\ldots,a_n))=1.$$
\end{definition}

Note that for every $\L$-structure $\A$ with
$\dom(\A)=\k$ there exists $\eta\in 2^\kappa$ with
$\A= \A_\eta$. It is clear how this coding can be
modified for a finite vocabulary. 
For club many $\alpha<\kappa$ we can
also code the $\L$-structures with domain $\alpha$:

\begin{definition}\label{def:clubstr}
  Denote by $C_\pi$ the club
  $\{\alpha<\kappa\mid \pi[\alpha^{<\omega}]\subseteq \alpha\}$. For
  every $\eta\in 2^\kappa$ and every $\alpha\in C_\pi$ define the
  structure $\A_{\eta\restriction \alpha}$ with domain
  $\alpha$ as follows: For every relation $P_m$ with arity $n$, every
  tuple $(a_1,a_2,\ldots , a_n)$ in $\alpha^n$ satisfies
  $$(a_1,a_2,\ldots , a_n)\in P_m^{\A_{\eta\restriction
      \alpha}}\Longleftrightarrow (\eta\restriction\alpha)
  (\pi(m,a_1,a_2,\ldots,a_n))=1.$$
\end{definition}

Note that for every $\alpha\in C_\pi$ and every $\eta\in 2^\k$ the
structures $\A_{\eta\restriction \a}$ and
$\A_\eta\restriction\a$ are the same.

Let us denote by $\Mod^\k_T$ the subset of $2^\k$ consisting of those
elements that code the models of a first-order countable theory $T$ (not
necessarily complete). Abbreviate first-order countable theory as FOCT
from now on. We will be interested in particular in $T=G$, the theory
of graphs (symmetric and irreflexive) and $T=\DLO$, the theory of
dense linear orders without end-points.  We consider $\Mod^\k_T$ as a
topological space endowed with the subspace topology. 

We can now define some central relations for this paper. A
\emph{quasi-order} is a transitive and reflexive relation.

\begin{definition}[Relations]\label{def:ISO}
  We will use the following relations.
  \begin{description}
  \item[Isomorphism] For a FOCT $T$, define
    $$\cong^\k_T\ =\ \cong_T\ =\{(\eta,\xi)\in 2^\kappa\times 2^\kappa\mid
    \eta,\xi\in \Mod^\k_T, \A_\eta\cong \A_\xi\text{
      or } \eta,\xi\notin \Mod^\k_T\}.$$
  \item[Embeddability] For a FOCT $T$, define the quasi-order
    $$\qo^\k_T\ =\ \qo_{T}\ =\{(\eta,\xi)\in (\Mod^\k_T)^2\mid \A_\eta\text{ is embeddable into }\A_{\xi}\}$$
    Thus, for example $\qo_G$ is the embeddability of graphs and
    $\qo_{\DLO}$ is the embeddability of dense linear orders.
  \item[Bi-embeddability] For a FOCT $T$ and
    $\eta,\xi\in \Mod^\k_{T}$, let
    $$\eta \approx_T \xi\iff \eta \qo_T \xi \land \xi\qo_T \eta.$$
  \item[Inclusion mod NS] For $\eta,\xi\in 2^\k$ and a stationary
    $S\subseteq\k$, we write
    $\eta\qo^{S} \xi$ if $(\eta^{-1}\{1\}\backslash \xi^{-1}\{1\})\cap S$
    is non-stationary.
  \item[Equivalence mod NS] For every stationary $S\subseteq \kappa$
    and $\l\in\{2,\k\}$, we define $E^{\l}_S$ as the relation
    $$E^{\l}_S=\{(\eta,\xi)\in \l^\k\times \l^\k\mid \{\a<\k\mid
    \eta(\a)\ne \xi(\a)\}\cap S \text{ is not stationary}\}.$$
    Note that $\eta E^2_S \xi$ if and only if $\eta\qo^S \xi \land \xi\qo^S \eta.$
  \end{description}
\end{definition}

If $S$ is the set of all $\mu$-cofinal ordinals, denote
$E^\l_{S}=E^\l_{\mu}$ and $\qo^S=\qo^\mu$. If $S$ is the set of all
regular cardinals below $\k$, denote $S=\reg(\k)=\reg$ in which case
$E^\l_S=E^\l_{\reg}$ and $\qo^S=\qo^{\reg}$. If $S=\k$, write
$E^\l_S=E^\l_{\NS}$ and $\qo^S=\qo^{\NS}$

A quasi-order $Q$ on (a Borel set) $X\subseteq \k^\k$ is $\Sigma_1^1$,
if $Q\subseteq X^2$ is the projection of a closed set in
$X^2\times \k^\k$ ($X$ is equipped with subspace topology and
$X^2\times \k^\k$ with the product topology). All quasi-orders of
Definition~\ref{def:ISO} (note that equivalence relations are
quasi-orders) are~$\Sii$.

Suppose $X,Y\subseteq \k^\k$ are Borel. A function
$f\colon X\to Y$ is \emph{Borel}, if for every open set
$A\subseteq Y$ the inverse image $f^{-1}[A]$ is a Borel subset of $X$
with respect to the induced Borel structure on $X$ and $Y$. 

If $Q_1$ and $Q_2$ are quasi-orders respectively on $X$ and $Y$, then
we say that $Q_1$ is \emph{Borel-reducible} to $Q_2$, if there exists a
Borel map $f\colon X\to Y$ such that for all $x_1,x_2\in X$ we have
$x_1 Q_1 x_2\iff f(x_1) Q_2 f(x_2)$ and this is also denoted by
$Q_1\le_B Q_2$. If $f$ is continuous (inverse image of an open set is
open), then we say that $Q_1$ is \emph{continuously reducible} to
$Q_2$.  Note that equivalence relations are quasi-orders, so this
gives naturally a notion of reducibility for them as well.
We will interchangeably use the notations $xEy$ and $(x,y)\in E$
if $E$ is a binary relation, because we consider it as a set of pairs.
Sometimes one notation is clearer than the other.

Note that if we define $F\colon 2^\k\to 2^\k$ by 
$$F(\eta)(\a)=
\begin{cases}
  \eta(\a) &\mbox{if } \a\in S\\
  1 & \mbox{otherwise }
\end{cases}
$$ 
for a fixed $S\subseteq\k$, we obtain:

\begin{fact}\label{reg-red-NS}
  For all stationary $S\subseteq S'$ we have $\qo^S\,\le_B\,\qo^{S'}$. \qed
\end{fact}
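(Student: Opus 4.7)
The plan is to verify that the function $F$ defined just before the statement is a continuous (hence Borel) reduction witnessing $\qo^S \le_B \qo^{S'}$. So essentially all the work is checking (a) continuity of $F$ and (b) that $F$ preserves and reflects the quasi-order.

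For continuity, I would note that for any basic open set $U_p = \{\xi\in 2^\k \mid \xi\supset p\}$ with $p\in 2^{<\k}$, the preimage $F^{-1}[U_p]$ is either empty (if $p(\alpha)=0$ for some $\alpha\in\dom(p)\setminus S$) or equals $\{\eta \mid \eta(\alpha) = p(\alpha)\text{ for all }\alpha\in\dom(p)\cap S\}$, which is open. So $F$ is continuous.

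The key computation is that $F(\eta)^{-1}\{1\} = (\eta^{-1}\{1\}\cap S)\cup(\k\setminus S)$, and similarly for $\xi$. Subtracting these,
$$F(\eta)^{-1}\{1\}\setminus F(\xi)^{-1}\{1\} = (\eta^{-1}\{1\}\setminus \xi^{-1}\{1\})\cap S,$$
since outside $S$ both $F(\eta)$ and $F(\xi)$ are constantly $1$. Intersecting with $S'$ and using $S\subseteq S'$ gives
$$(F(\eta)^{-1}\{1\}\setminus F(\xi)^{-1}\{1\})\cap S' = (\eta^{-1}\{1\}\setminus \xi^{-1}\{1\})\cap S.$$
Thus the set on the left is non-stationary iff the set on the right is, i.e.\ $F(\eta)\qo^{S'}F(\xi)\iff \eta\qo^S\xi$, as required.

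I do not anticipate any serious obstacle; the only subtlety is remembering that $\qo^S$ looks only at the symmetric difference intersected with $S$ (not a symmetric difference of restrictions), which is what makes the trick of filling in $1$'s off $S$ legitimate — the added positions contribute nothing to either side of the equivalence because they are $1$ on both $F(\eta)$ and $F(\xi)$ simultaneously.
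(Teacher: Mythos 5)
Your proposal is correct and is exactly the argument the paper intends: the Fact is stated with an immediate \qed because the map $F$ defined just above it is the reduction, and your verification of continuity and of the identity $(F(\eta)^{-1}\{1\}\setminus F(\xi)^{-1}\{1\})\cap S'=(\eta^{-1}\{1\}\setminus \xi^{-1}\{1\})\cap S$ is precisely what the reader is expected to check. (One terminological nit: in your closing remark you say $\qo^S$ looks at the ``symmetric difference'', but it is the one-sided difference $\eta^{-1}\{1\}\setminus\xi^{-1}\{1\}$ that matters, which is what you actually use in the computation.)
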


A quasi-order is \emph{$\Sigma_1^1$-complete}, if every
$\Sigma_1^1$ quasi-order is Borel-reducible to it.  An
equivalence relation is \emph{$\Sii$-complete} if every
$\Sii$ equivalence relation is Borel-reducible to it.

A Borel equivalence relation $E$ on a Borel subspace $X\subseteq 2^\k$
can be extended to $2^\k$ by declaring all other elements equivalent
to each other, but not equivalent to any of the elements in $X$.
Similarly a quasi-order $\qo$ on $X\subseteq 2^\k$ can be trivially
extended to the whole space $2^\k$. If the original equivalence
relation or quasi-order was $\Sigma^1_1$-complete, then so are the
extensions.

%%%%%%%%%%%%%%%%%%%%%%%%%%%%%%%%%%%%%%%%%%%%%%%%%%%%%%%%%%

\section{$\Sigma_1^1$--completeness of $\qo^S$ in~$L$}

This section is devoted to proving Theorem \ref{thm:QOV=L}.  In
Section~\ref{sec:Corollaries} a range of corollaries will be proved.

\begin{theorem}\label{thm:QOV=L}
  $(V=L, \k>\o)$ The quasi-order $\qo^\mu$ is $\Sigma_1^1$--complete, for
  every regular $\mu<\k$.
\end{theorem}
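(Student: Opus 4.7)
The strategy is to construct, for an arbitrary $\Sigma_1^1$ quasi-order $Q$ on $\k^\k$, an explicit Borel map $f\colon \k^\k \to 2^\k$ satisfying $x_1 Q x_2 \iff f(x_1) \qo^\mu f(x_2)$. Writing $Q$ as the projection of a closed set, fix a tree $T \subseteq \bigcup_{\a<\k}(\k^\a)^3$ so that $(x,y) \in Q$ iff there exists $z \in \k^\k$ with $(x\rest\a,\, y\rest\a,\, z\rest\a) \in T$ for every $\a < \k$; write $[T]$ for the set of $\k$-branches. Let $S \subseteq \k$ denote the set of ordinals of cofinality $\mu$. Under $V=L$, I would invoke a $\Diamond$-type sequence $((y_\a, z_\a))_{\a\in S}$, with $y_\a, z_\a \in \k^\a$, having the guessing property that for every $(y,z) \in (\k^\k)^2$ the set $\{\a \in S : (y_\a, z_\a) = (y\rest\a,\, z\rest\a)\}$ is stationary. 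The candidate reduction is $f(x)(\a) = 1$ precisely when $\a \in S$ and $(y_\a,\, x\rest\a,\, z_\a)$ is a node of $T$ at level $\a$.

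The direction $\neg(x_1 Q x_2) \Rightarrow \neg(f(x_1) \qo^\mu f(x_2))$ uses reflexivity. Pick $w$ with $(x_1, x_1, w) \in [T]$; the diamond yields a stationary $S^* \subseteq S$ on which $(y_\a, z_\a) = (x_1\rest\a,\, w\rest\a)$, and for every $\a \in S^*$ one has $f(x_1)(\a) = 1$ because $(x_1, x_1, w) \in [T]$. Since $\neg(x_1 Q x_2)$, no $w'$ satisfies $(x_1, x_2, w') \in [T]$; in particular $(x_1, x_2, w) \notin [T]$, so the tree property provides $\a_0$ with $(x_1\rest\a,\, x_2\rest\a,\, w\rest\a) \notin T$ for all $\a \ge \a_0$. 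Hence $f(x_2)(\a) = 0$ on a cofinal portion of $S^*$, making $\{\a \in S : f(x_1)(\a) = 1 \land f(x_2)(\a) = 0\}$ stationary.

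The forward direction $x_1 Q x_2 \Rightarrow f(x_1) \qo^\mu f(x_2)$ is where I expect the main obstacle. The naive $f$ above is not visibly monotone in $Q$: if $f(x_1)(\a) = 1$ because $z_\a$ certifies the partial branch $(y_\a,\, x_1\rest\a,\, z_\a) \in T$, there is no reason the same $z_\a$ certifies $(y_\a,\, x_2\rest\a,\, z_\a) \in T$, since transitivity of $Q$ composes witnesses rather than reusing them. My plan is to strengthen the definition by existentially closing over the witness coordinate: set $f(x)(\a) = 1$ iff $\a \in S$ and there exists $z$ in some uniform initial segment $L_{g(\a)}$ of the constructible hierarchy with $(y_\a,\, x\rest\a,\, z) \in T$. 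This preserves Borelness because the existential ranges over a set of size $<\k$, and monotonicity in $x$ amounts to showing that the transitive composition of witnesses for $y Q x_1$ and $x_1 Q x_2$ produces a witness for $y Q x_2$ sitting in the same $L_{g(\a)}$-bound for all but non-stationarily many $\a$. Calibrating $g$ via condensation and the standard $L$-fine-structural Skolem-hull arguments is the technical heart of the argument; alternatively one might first reduce $Q$ to a well-chosen $\Sigma_1^1$-complete quasi-order, such as embeddability of coloured $\k$-trees, whose tree representation admits a sufficiently absolute composition of witnesses. Once either route is in place, the verification of both directions runs through the diamond prediction as in the previous paragraph.
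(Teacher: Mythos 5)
You have correctly identified both the architecture of the argument (a diamond sequence guessing ``test points'', with $f(x)(\a)$ recording whether the guessed point is locally $Q$-below $x\rest\a$) and the central obstruction: monotonicity of this coding requires composing witnesses, i.e.\ a local form of transitivity. This is the same skeleton as the paper's proof. But the two steps you defer are precisely the content of the theorem, and neither goes through as sketched. For the forward direction, the paper does not calibrate a bound $g(\a)$ on the witness at all; it abandons the tree representation in favour of a $\Sigma_1$ formula $\psi(\eta,\xi,x)$ of set theory defining $Q$, sets $T_{\eta,\a}=\{p\in 2^\a\mid \exists\b>\a\,(L_\b\models \ZF^\diamond\wedge\psi(p,\eta\rest\a,x\rest\a)\wedge r(\a)\wedge\psi^Q(\a))\}$ with the witness quantifier ranging over \emph{all} $\b>\a$, and --- crucially --- builds into the definition the clause $\psi^Q(\a)$ asserting that the reflected formula defines a quasi-order on $2^\a$ inside $L_\b$. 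Local transitivity is then available by fiat, the composition of two local witnesses lives in $L_{\max\{\b_1,\b_2\}}$, and a Skolem-hull argument gives $T_{\eta,\a}\subseteq T_{\xi,\a}$ on a club whenever $\eta\,Q\,\xi$. In your tree formulation, transitivity of $Q$ is not a level-by-level property of $T$, and no hull argument obviously forces the composed witness below a prescribed $g(\a)$; this is exactly why the switch to the $\Sigma_1$-over-$L_\b$ picture is made.

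The second gap is that your verification of $\neg(x_1\,Q\,x_2)\Rightarrow\neg(f(x_1)\qo^\mu f(x_2))$ is carried out only for the first, non-closed definition of $f$ and does not survive the existential closure over witnesses. Once $f(x_2)(\a)=1$ merely requires \emph{some} witness at level $\a$, the absence of a full branch through the witness tree over $(x_1,x_2)$ does not rule out level-$\a$ nodes for stationarily many $\a$ in the guessing set: a $\k$-tree can have nodes at every level and no $\k$-branch. Killing these spurious local witnesses on a stationary subset of the diamond-guessing set is exactly the content of the paper's Lemma~\ref{reflection-in-L}, whose proof takes the \emph{least} element $\a_0$ of $\lim_\mu D\cap S$ (where $S=\{\a\in S^\k_\mu\mid X_\a=\eta^{-1}\{1\}\cap\a\}$ depends on $\eta$ through the canonical $L$-diamond) and derives a contradiction via condensation and $L_\b\models\ZF^\diamond$. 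Your closing remark that ``the verification of both directions runs through the diamond prediction as in the previous paragraph'' is therefore not justified: without an analogue of that reflection lemma, the map is not shown to be a reduction.
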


As mentioned in Introduction, this is an improvement to a theorem in
\cite{HK} which says that $E^\k_{\mu}$ is $\Sii$-complete.

\begin{definition} We will need a version of the diamond principle.
  Denote by $\ON$ the class of all ordinals.
  \begin{itemize}
  \item Let us define a class function $F_\Diamond\colon \ON\to L$.
    For all $\a$, $F_\Diamond(\a)$ is a pair $(X_\a,C_\a)$ where
    $X_\a,C_\a\subseteq \a$, if $\a$ is a limit ordinal, then $C_\a$
    is either a club or the empty set, and $C_\a=\emptyset$ when $\a$
    is not a limit ordinal. We let $F_\Diamond(\a)=(X_\a,C_\a)$ be the
    $<_L$-least pair such that for all $\beta\in C_\a$,
    $X_\beta\neq X_\a\cap\beta$ if $\a$ is a limit ordinal and such
    pair exists and otherwise we let
    $F_\Diamond(\a)=(\emptyset,\emptyset)$.
  \item We let $C_\Diamond\subseteq \ON$ be the class of all limit
    ordinals $\a$ such that for all $\beta<\a$,
    $F_\Diamond\restriction\beta\in L_\a$. Notice that for every
    regular cardinal $\a$, $C_\Diamond\cap\a$ is a club.
  \end{itemize}
\end{definition}

\begin{definition}\label{diamondseq}
  For a given regular cardinal $\a$ and a subset $A\subseteq \a$, we
  define the sequence $(X_\gamma,C_\gamma)_{\gamma\in A}$ to be
  $(F_\Diamond(\gamma))_{\gamma\in A}$, and the sequence
  $(X_\gamma)_{\gamma\in A}$ to be the sequence of sets $X_\gamma$
  such that $F_\Diamond(\gamma)=(X_\gamma,C_\gamma)$ for some
  $C_\gamma$.
\end{definition}

By $S^\a_\mu$ we denote the set of all $\mu$-cofinal ordinals
below~$\a$.

\begin{xrem}
  It is known that if $\a$ and $\mu$ are regular cardinals such that
  $\mu<\a$, then the sequence $(X_\gamma)_{\gamma\in S_\mu^\a}$ is a
  diamond sequence (i.e. for all $Y\subseteq \a$, the set
  $\{\gamma\in S_\mu^\a\mid Y\cap\gamma=X_\gamma\}$ is
  stationary). Notice that if $\beta\in C_\Diamond$, then for all
  $\gamma<\beta$, $X_\gamma\in L_\beta$.
\end{xrem}

By $\ZF^-$ we mean ZFC$+(V=L)$ without the power set axiom. By
$\ZF^\diamond$ we mean $\ZF^-$ with the following axiom:

\begin{quote}
  ``Let $(S_\gamma,D_\gamma)=F_\Diamond(\gamma)$ for all $\g<\a$ and
  $\mu<\a$ a regular cardinal. Then $(S_\gamma)_{\gamma\in S^\a_\mu}$
  is a diamond sequence.''
\end{quote}

Whether or not $\ZF^{-}$ proves $\ZF^{\diamond}$ is
irrelevant for the present argument.  We denote by $\Sk(Y)^{L_\theta}$
the Skolem closure of $Y$ in $L_\theta$ under the definable Skolem
functions.

\begin{lemma}\label{reflection-in-L}
  $(V=L)$ For any $\Sigma_1$-formula $\varphi(\eta,x)$ with parameter
  $x\in 2^\k$ and a regular cardinal $\mu<\k$, then for all $\eta\in 2^\k$
  we have:
  \begin{enumerate}
  \item If $\varphi(\eta,x)$ holds, then $A$ contains a club,
  \item If $\varphi(\eta,x)$ does not hold, then 
    $S\backslash A$ is $\mu$-stationary,
  \end{enumerate}
  where $S=\{\a\in S^\k_\mu\mid X_\a=\eta^{-1}\{1\}\cap\a\}$ and
  $$A=\big\{\a\in C_\Diamond\cap \k\mid \exists \beta>\a\big(L_\beta\models
  \ZF^\diamond\wedge \varphi(\eta\restriction
  \a,x\restriction\a)\wedge r(\a)\big)\big\}$$
  where $r(\alpha)$ is the formula ``$\alpha$ is a regular
  cardinal''.
\end{lemma}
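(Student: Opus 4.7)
My plan is to prove both parts via Löwenheim--Skolem arguments combined with Gödel's condensation lemma, using crucially that $V=L$ and that $\varphi$ is $\Sigma_1$. The induction is on $\kappa$: I assume the lemma holds, in any transitive model of $\ZF^\diamond + V=L$, at every regular cardinal $\kappa' < \kappa$, and prove both (i) and (ii) at $\kappa$ in $V$.

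For (i), since $\varphi(\eta,x)$ is $\Sigma_1$ and holds in $V=L$, I would fix a regular $\theta$ large enough that $L_\theta\models \varphi(\eta,x)\wedge \ZF^\diamond \wedge r(\kappa)$. For each $\alpha<\kappa$ let $M_\alpha = \Sk(\alpha\cup\{\eta,x\})^{L_\theta}$; the set $\{\alpha<\kappa : M_\alpha\cap\kappa=\alpha\}\cap C_\Diamond$ is a club. For $\alpha$ in this club, condensation gives a transitive collapse $\pi_\alpha : M_\alpha \cong L_{\beta_\alpha}$ with $\beta_\alpha>\alpha$. Since $\alpha\subseteq M_\alpha$ is an initial segment of ordinals, $\pi_\alpha$ fixes $\alpha$ pointwise and sends $\kappa\mapsto\alpha$, $\eta\mapsto\eta\restriction\alpha$, $x\mapsto x\restriction\alpha$; hence $L_{\beta_\alpha}\models \varphi(\eta\restriction\alpha,x\restriction\alpha)\wedge \ZF^\diamond\wedge r(\alpha)$, i.e.\ $\alpha\in A$, so $A$ contains a club.

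For (ii), assume $\neg\varphi(\eta,x)$ and, toward a contradiction, that $S\setminus A$ is not $\mu$-stationary, so $S\cap D\subseteq A$ for some $\mu$-club $D$. Let $g$ be the canonical function sending each $\alpha\in S\cap D$ to the least witness $\beta_\alpha$ for $\alpha\in A$. I would pick $\theta$ regular and large enough that $L_\theta$ contains $\eta,x,D,S,g,\mu$ and satisfies $\ZF^\diamond + r(\kappa) + (S\cap D\subseteq A)$; by $\Sigma_1$-absoluteness, also $L_\theta\models \neg\varphi(\eta,x)$. Setting $N_\alpha = \Sk(\alpha\cup\{\eta,x,D,S,g,\kappa,\mu\})^{L_\theta}$, a $\mu$-club of $\alpha<\kappa$ satisfies $N_\alpha\cap\kappa=\alpha$ and $\alpha\in D\cap C_\Diamond$; intersecting with the stationary $S$ yields such $\alpha\in S\cap D$, where furthermore $X_\alpha=\eta^{-1}\{1\}\cap\alpha$. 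The collapse $\pi_\alpha:N_\alpha\cong L_{\bar\theta_\alpha}$ sends the parameters to their obvious restrictions and forces
$$L_{\bar\theta_\alpha}\models \neg\varphi(\eta\restriction\alpha,x\restriction\alpha)\wedge \ZF^\diamond\wedge r(\alpha)\wedge (S\cap D\subseteq A).$$
Inside $L_{\bar\theta_\alpha}$, $\alpha$ is a regular cardinal below $\kappa$; by the inductive hypothesis the lemma applies there, and its Part (ii) makes $S\setminus A$ $\mu$-stationary in $L_{\bar\theta_\alpha}$. But the reflected inclusion gives $S\setminus A\subseteq S\setminus D$, which is $\mu$-non-stationary in $L_{\bar\theta_\alpha}$ since $D$ is a $\mu$-club there. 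This is the desired contradiction.

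The principal difficulty will be arranging the Skolem hull so that the condensed model $L_{\bar\theta_\alpha}$ truly sees enough: it must satisfy $\ZF^\diamond$ (so that the inductive hypothesis applies at $\alpha$), and it must carry through the reflected inclusion $S\cap D\subseteq A$ in a form strong enough for internal Part (ii) to fire. This forces $\theta$ to be chosen so that all witnesses $\beta_\gamma$ for $\gamma\in S\cap D$ lie below $\theta$, and the generators $D,S,g,\mu$ to be included in $N_\alpha$. Once these bookkeeping steps are in place, the induction on $\kappa$ closes the argument uniformly.
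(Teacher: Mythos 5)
Your part (i) is essentially the paper's argument (Skolem hulls in a large $L_\theta$, condensation, the club of $\a$ with $H(\a)\cap\k=\a$ intersected with $C_\Diamond$), and it is fine. Part (ii), however, rests on a device the paper does not use and that, as set up, has a genuine gap: the induction on $\k$ with inductive hypothesis ``the lemma holds at every regular $\k'<\k$ \emph{in any transitive model} of $\ZF^\diamond+V{=}L$.'' For that hypothesis to be available you must show that your own proof relativizes to an arbitrary such model $M$; but your proof begins by choosing $\theta$ with $L_\theta\models \ZF^\diamond\wedge\dots$ \emph{inside} the model, and a minimal transitive model of $\ZF^\diamond$ contains no such level. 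Hulling $M$ itself does not repair this: the ordinal $\a$ playing the role of $\k$ in $M=L_{\bar\theta_\a}$ is typically singular (of cofinality $\mu$) in $V$, so the set $\{\a'<\a \mid \Sk(\a'\cup\{p\})^{M}\cap\a=\a'\}$ need not be nonempty (for the minimal model it is empty, since $\Sk(\es)^M=M$). Patching this by demanding that the models contain internal $\ZF^\diamond$-levels pushes the same requirement one level down and produces a regress of strengthened theories that you would have to close off explicitly. Since the entire contradiction in your part (ii) is delegated to this inductive hypothesis, the gap is essential, not bookkeeping.

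The paper avoids induction altogether. Given an arbitrary $\mu$-club $C$, it forms the hulls $H(\a)=\Sk(\a\cup\{\k,C,\eta,x,(X_\g,C_\g)_{\g\in S^\k_\mu}\})^{L_\theta}$, lets $D$ be the associated $\mu$-club, and takes $\a_0$ to be the \emph{least} element of $(\lim_\mu D)\cap S$; elementarity puts $\a_0\in C$. If $\a_0\in A$ with witness $L_\b$, then $\b$ exceeds the collapse level $\bar\b$ of $H(\a_0)$ (here $\Sigma_1$-upward absoluteness of $\f$ is used), $L_\b$ computes cofinalities below $\a_0$ correctly, contains $D\cap\a_0$ as a $\mu$-club, and --- because $L_\b\models\ZF^\diamond$ and $\a_0\in C_\Diamond$ --- believes $S\cap\a_0$ is stationary. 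Hence $L_\b$, and then $L$, sees a point of $(\lim_\mu D)\cap S$ below $\a_0$, contradicting minimality. Note that the single axiom $\ZF^\diamond$ holding in the \emph{witness} $L_\b$ does exactly the work you are asking the full inductive hypothesis to do, and the minimality of $\a_0$ replaces your proof by contradiction; this is also where the membership $\a_0\in S$ (equivalently $X_{\a_0}=\eta^{-1}\{1\}\cap\a_0$) genuinely enters, whereas in your sketch it is mentioned but never used. I recommend replacing your part (ii) by this minimality argument.
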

\begin{xrem}
  This Lemma is reminiscent of \cite[Remark 1.10]{HK}, but there is a
  big difference, because now $S$ depends on $\eta$ through the
  diamond-sequence which makes this Lemma stronger. The proof in
  \cite{HK} is not applicable here.
\end{xrem}
\begin{proof}
  Let $\mu<\k$ be a regular cardinal.  Suppose that $\eta\in 2^\k$ is
  such that $\varphi(\eta,x)$ holds. Let $\theta>\k$ be a cardinal large
  enough such that
  $$L_\theta\models \ZF^\diamond\wedge \varphi(\eta,x)\wedge
  r(\k).$$
  For each $\alpha<\kappa$,
  let $$H(\alpha)=\Sk(\alpha\cup\{\k,\eta,x\})^{L_\theta}$$ and
  $\bar{H}(\alpha)$ the Mostowski collapse of
  $H(\alpha)$. Let $$D=\{\alpha<\k\mid H(\alpha)\cap\k=\alpha\}.$$
  Then $D$ is a club set and $D\cap C_\Diamond$ is a club. Since
  $H(\alpha)$ is an elementary submodel of $L_\theta$ and the
  Mostowski collapse $\bar{H}(\alpha)$ is equal to $L_\beta$ for some
  $\beta>\a$, we have $D\cap C_\Diamond\subseteq A$.
  This proves~i.
  
  Suppose $\eta\in 2^\k$ is such that $\varphi(\eta,x)$ does not
  hold. Let $\mu<\k$ be a regular cardinal. 
  Let $C$ be an arbitrary unbounded set which is closed under $\mu$-limits (a
  $\mu$-club). We will show that $C\cap (S\setminus A)$ is non-empty which
  by the arbitrariness of $C$ implies that $S\setminus A$ is $\mu$-stationary,
  as desired.

  Let $\theta>\k$ be a large enough cardinal such that 
  $$L_\theta\models \ZF^\diamond\wedge \neg\varphi(\eta,x)\wedge r(\k).$$
  Let
  $$H(\alpha)=\Sk(\alpha\cup\{\k,C,\eta,x,(X_\gamma,C_\gamma)_{\gamma\in S^\k_\mu}\})^{L_\theta}.
  $$
  Let
  $$D=\{\alpha\in S_\mu^\k\mid H(\alpha)\cap\k=\alpha\}$$
  Then $D$ is an unbounded set, closed under $\mu$-limits.  Notice
  that since $H(\a)$ is an elementary substructure of $L_\theta$, then
  $H(\a)$ calculates all cofinalities correctly below $\a$. Let
  $S=\{\a\in S^\k_\mu\mid X_\a=\eta^{-1}\{1\}\cap\a\}$ and $\alpha_0$
  be the least ordinal in $(\lim{\!}_\mu D)\cap S$ (where
  $\lim{\!}_\mu D$ is the set of ordinals of $D$ that are
  $\mu$-cofinal limits of elements of $D$). By the elementarity of
  each $H(\alpha)$ we conclude that $\alpha_0\in C$.  It remains to show
  that $\alpha_0\notin A$ to complete the proof.
  
  Let $\bar{\beta}$ be such that $L_{\bar{\beta}}$ is equal to the
  Mostowski collapse of $H(\alpha_0)$.  Suppose, towards a contradiction, that
  $\alpha_0\in A$. Thus $\a_0\in C_\Diamond\cap\k$ and there exists
  $\beta>\alpha_0$ such that
  $$L_\beta\models \ZF^\diamond\wedge \varphi(\eta\restriction
  \alpha_0,x\restriction \alpha_0)\wedge r(\alpha_0).$$
  Since $\varphi(\eta,x)$ is a $\Sigma_1$-formula which holds in
  $L_{\beta}$ and not in $L_{\bar\beta}$, $\beta$ must be greater than
  $\bar{\beta}$. It must be a limit ordinal because $L_\b\models \ZF^{-}$.
  
  \begin{claim}
    $L_\beta$ satisfies the following:
    \begin{enumerate}
    \item For all $\gamma\in S\cap \a_0$, $\gamma$ has cofinality
      $\mu$.
    \item $S\cap \a_0$ is a stationary subset of $\a_0$.
    \item $D\cap\a_0$ is a $\mu$-club subset of $a_0$.
    \end{enumerate}
  \end{claim}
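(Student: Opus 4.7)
The plan is to sandwich $L_\beta$ between $V$ and the Mostowski collapse $L_{\bar\beta}$ of $H(\alpha_0)\preceq L_\theta$, and then to invoke the $\ZF^\diamond$ axiom inside $L_\beta$ itself. The key preliminary step is to show that cofinalities below $\alpha_0$ coincide in $V$, $L_{\bar\beta}$, and $L_\beta$. Since $L_\theta$ is a sufficiently large initial segment of $L=V$, it computes cofinalities correctly, and as every $\gamma<\alpha_0$ lies in $H(\alpha_0)$, elementarity gives $H(\alpha_0)\models \cf(\gamma)=\cf^V(\gamma)$. The collapse fixes ordinals below $\alpha_0$, so $\cf^{L_{\bar\beta}}(\gamma)=\cf^V(\gamma)$. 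Since $L_{\bar\beta}\subseteq L_\beta\subseteq V$, the squeeze $\cf^V(\gamma)\le \cf^{L_\beta}(\gamma)\le \cf^{L_{\bar\beta}}(\gamma)$ forces equality throughout.

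From this, (1) is immediate: every $\gamma\in S\cap\alpha_0$ lies in $S^\k_\mu$, so $\cf^V(\gamma)=\mu$, and hence $\cf^{L_\beta}(\gamma)=\mu$. For (3), elementarity together with the collapse applied to the statement ``$D$ is a $\mu$-club in $\kappa$'' yields $L_{\bar\beta}\models$ ``$D\cap\alpha_0$ is a $\mu$-club in $\alpha_0$''. Unboundedness is absolute. For $\mu$-closure inside $L_\beta$: a strictly increasing $\mu$-sequence in $L_\beta$ of elements of $D\cap\alpha_0$ with supremum $\gamma<\alpha_0$ forces $\cf^{L_\beta}(\gamma)=\mu$ by regularity of $\mu$, hence $\cf^V(\gamma)=\mu$ by cofinality agreement, so $\gamma\in S^\k_\mu$; the $\mu$-closure of $D$ in $V$ then places $\gamma$ in $D\cap\alpha_0$.

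Part (2) is the main obstacle, because stationarity is not preserved under enlarging $L_{\bar\beta}$ to $L_\beta$. The plan is to appeal to the diamond directly inside $L_\beta$. Since $L_\beta\subseteq V$, $\mu$ remains a regular cardinal in $L_\beta$, and by hypothesis $L_\beta\models r(\alpha_0)$; so $\ZF^\diamond$ in $L_\beta$ makes $(X_\gamma)_{\gamma\in (S^{\alpha_0}_\mu)^{L_\beta}}$ a diamond sequence at $\alpha_0$. Applying it to $Y=\eta^{-1}\{1\}\cap\alpha_0\in L_\beta$ shows that $S'=\{\gamma\in (S^{\alpha_0}_\mu)^{L_\beta}\mid X_\gamma=Y\cap\gamma\}$ is stationary in $\alpha_0$ in $L_\beta$. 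By cofinality agreement, $(S^{\alpha_0}_\mu)^{L_\beta}=S^\k_\mu\cap\alpha_0$, so $S'=S\cap\alpha_0$, giving (2).
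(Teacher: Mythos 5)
Your proof follows essentially the same route as the paper's: (1) via the squeeze $\cf^{V}\le\cf^{L_\beta}\le\cf^{L_{\bar\beta}}=\cf^{V}$ below $\a_0$, (3) via the collapse of $H(\a_0)$ plus the cofinality agreement, and (2) by invoking $\ZF^\diamond$ inside $L_\beta$ itself. Parts (1) and (3) are fine as written.

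In part (2) there is one step you pass over silently. The axiom $\ZF^\diamond$ as satisfied by $L_\beta$ asserts that the sequence $(S_\gamma)_{\gamma\in S^{\a_0}_\mu}$ with $(S_\gamma,D_\gamma)=F_\Diamond(\gamma)$ \emph{as computed inside $L_\beta$} is a diamond sequence; it says nothing a priori about the true sets $X_\gamma$. Your identification of the stationary set $S'$ with $S\cap\a_0$ therefore needs, in addition to $(S^{\a_0}_\mu)^{L_\beta}=S^\k_\mu\cap\a_0$ (which your cofinality agreement does give), the equality $F_\Diamond^{L_\beta}\restriction\a_0=F_\Diamond\restriction\a_0$. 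This is exactly what the hypothesis $\a_0\in C_\Diamond$ is for: $\a_0\in A$ forces $\a_0\in C_\Diamond\cap\k$, so $F_\Diamond\restriction\gamma\in L_{\a_0}\subseteq L_\beta$ for all $\gamma<\a_0$, and the $<_L$-least-witness recursion defining $F_\Diamond$ is then computed identically in $L_\beta$. The paper invokes $\a_0\in C_\Diamond$ explicitly at this point; you should add that one line, after which your argument is complete.
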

  
  \begin{proof}
    \begin{enumerate}
    \item $H(\a_0)$ calculates all cofinalities correctly below
      $\a_0$. Thus $L_{\bar{\beta}}$ calculates all cofinalities
      correctly below $\a_0$. Since $\beta$ is greater than
      $\bar{\beta}$, $L_{\beta}$ also calculates all cofinalities correctly
      below $\a_0$. Since $S\cap \a_0\subseteq S^\k_\mu$ in $L$, we have
      that $S\cap \a_0\subseteq S^\k_\mu$ holds in $L_{\beta}$.
    \item Since $\a_0\in C_\Diamond\cap\k$ and $L_\beta$ satisfies
      $\ZF^\diamond$ and $r(\a_0)$, $L_\beta$ satisfies that
      $S\cap \a_0$ is a stationary subset of $\a_0$.
    \item Being unbounded in $\a_0$ is absolute between $L$ and
      $L_\beta$ and since $\a_0\in\lim{\!}_\mu D$, $D\cap \a_0$ is
      unbounded in $\a_0$, so it remains to show that $D\cap \a_0$
      is closed under $\mu$-limits in~$L_\b$.

      Let $\a<\a_0$ be such that
      $L_\beta\models \cf(\a)=\mu\ \wedge\ \bigcup(D\cap\a)=\a$, we
      will show that $L_\beta\models \a\in D\cap \a_0$. Since
      $L_{\beta}$ calculates all cofinalities correctly below $\a_0$,
      $L\models \cf(\a)=\mu\wedge\ \bigcup(D\cap\a)=\a$. $D$ is a
      $\mu$-club in $L$, thus $L\models\a\in D$. Since $\a<\a_0$,
      $L\models\a\in D\cap\a_0$. We will finish the proof by showing
      that $L\models\a\in D\cap\a_0$ implies
      $L_\beta\models\a\in D\cap\a_0$.
      
      Notice that $H(\alpha_0)$ is a definable subset of $L_\theta$
      and $D$ is a definable subset of $L_\theta$. By elementarity,
      $D\cap\alpha_0$ is a definable subset of $H(\alpha_0)$, we
      conclude that $D\cap\alpha_0$ is a definable subset of
      $L_{\bar{\beta}}$ and $D\cap\alpha_0\in L_\beta$. Therefore
      $L_\beta\models\a\in D\cap\a_0$.
    \end{enumerate}
  \end{proof}
  
  Since $L_\beta\models r(\a_0)$, by the previous claim we concluded
  that $L_\beta$ satisfies ``$\lim{\!}_\mu D\cap \a_0$ is a
  $\mu$-club''. Since $S\cap \a_0$ is a stationary subset of $\a_0$ in
  $L_\beta$, we conclude that
  $$L_\beta\models (\lim{\!}_\mu D\cap \a_0)\cap S\cap\a_0\neq\emptyset,$$ so $$L\models (\lim{\!}_\mu D
  \cap \a_0)\cap S\cap\a_0\neq\emptyset.$$
  This contradicts the minimality of $\a_0$.
\end{proof}

Now we are ready to prove Theorem~\ref{thm:QOV=L}.

\begin{proof}[Proof of Theorem~\ref{thm:QOV=L}]
  Suppose $Q$ is a $\Sigma_1^1$ quasi-order on $\k^\k$. Let
  $a\colon\k^\k\to 2^{\k\times\k}$ be the map defined
  by $$a(\eta)(\alpha,\beta)=1\Leftrightarrow \eta(\alpha)=\beta.$$
  Let $b$ be a continuous bijection from $2^{\k\times\k}$ to $2^\k$,
  and $c= b\circ a$. Define $Q'\subset 2^\k\times 2^\k$ by
  $$(\eta,\xi)\in Q'\Leftrightarrow (\eta=\xi)\vee (\eta,\xi\in
  \ran(c)\wedge (c^{-1}(\eta),c^{-1}(\xi))\in Q)$$
  So $c$ is a continuous reduction of $Q$ to $Q'$, and $Q'$ is a
  $\Sigma_1^1$ quasi-order because it is a continuous image of $Q$.
  On the other hand $Q'$ is a quasi-order on $2^\k$ and not on $\k^\k$
  like the original $Q$ was. Hence, we can assume, without loss of
  generality, that $Q$ is a quasi-order on~$2^\k$.
  
  There is a $\Sigma_1$-formula of set theory
  $\psi(\eta,\xi)=\psi(\eta,\xi,x)=\exists k\varphi(k,\eta,\xi,x)\vee
  \eta=\xi$ with $x\in 2^\k$, such that for all $\eta,\xi\in 2^\k$,
  $$(\eta,\xi)\in Q\Leftrightarrow \psi(\eta,\xi),$$ we added
  $\eta=\xi$ to $\psi(\eta,\xi)$, to ensure that when we reflect
  $\psi(\eta\restriction \a,\xi\restriction\a)$ we get a reflexive
  relation.  Let $r(\alpha)$ be the formula ``$\alpha$ is a regular
  cardinal'' and $\psi^Q(\k)$ be the sentence with parameter $\k$ that
  asserts that $\psi(\eta,\xi)$ defines a quasi-order on $2^\k$.  For
  all $\eta\in 2^\k$ and $\alpha<\kappa$, let
  $$T_{\eta,\alpha}=\{p\in 2^\alpha\mid \exists
  \beta>\alpha(L_\beta\models \ZF^\diamond\wedge
  \psi(p,\eta\restriction \alpha,x\restriction\a)\wedge
  r(\alpha)\wedge \psi^Q(\a))\}.$$
  Let $(X_\a)_{\a\in S_\mu^\k}$ be the diamond sequence of
  Definition~\ref{diamondseq}, and for all $\a\in S_\mu^\k$, let
  $\chi_\a$ be the characteristic function of $X_\a$. Define
  $\F\colon 2^\k\to 2^\k$ by
  $$\F(\eta)(\alpha)=
  \begin{cases}
    1 &\mbox{if } \chi_\a\in T_{\eta,\a} \text{ and } \a\in S_\mu^\k\\
    0 & \mbox{otherwise }
  \end{cases}$$
  
  \begin{claimm}
    If $\eta\ Q\ \xi$, then $T_{\eta,\a}\subseteq T_{\xi,\a}$ for
    club-many $\a$'s.
  \end{claimm}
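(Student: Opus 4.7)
The plan is to exploit $\eta\, Q\, \xi$ — which in our coding means that $\psi(\eta,\xi,x)$ holds, with $\psi$ the defining $\Sigma_1$-formula — via a Skolem-hull/Mostowski-collapse argument in the spirit of Lemma~\ref{reflection-in-L}. I would first fix a cardinal $\theta>\kappa$ with
$$L_\theta\models \ZF^\diamond \wedge \psi(\eta,\xi,x) \wedge r(\kappa) \wedge \psi^Q(\kappa),$$
set $H(\alpha) = \Sk(\alpha\cup\{\kappa,\eta,\xi,x\})^{L_\theta}$, and consider $C := D \cap C_\Diamond$ where $D=\{\alpha<\kappa \mid H(\alpha)\cap\kappa=\alpha\}$. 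Standard arguments show that $C$ is a club, and the claim is that this $C$ witnesses $T_{\eta,\alpha}\subseteq T_{\xi,\alpha}$.

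Fixing $\alpha\in C$, the Mostowski collapse of $H(\alpha)$ is some $L_{\beta_\alpha}$ with $\beta_\alpha>\alpha$, sending $\kappa\mapsto\alpha$, $\eta\mapsto\eta\restriction\alpha$, $\xi\mapsto\xi\restriction\alpha$ and $x\mapsto x\restriction\alpha$; elementarity then delivers
$$L_{\beta_\alpha}\models \ZF^\diamond \wedge \psi(\eta\restriction\alpha,\xi\restriction\alpha,x\restriction\alpha) \wedge r(\alpha) \wedge \psi^Q(\alpha).$$
Given $p\in T_{\eta,\alpha}$ with witness $L_\beta$, my intention is to split on $\beta\le\beta_\alpha$ versus $\beta>\beta_\alpha$ and show that one of the two models already witnesses $p\in T_{\xi,\alpha}$. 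In the first case $p\in L_{\beta_\alpha}$, and upward $\Sigma_1$-absoluteness promotes $\psi(p,\eta\restriction\alpha,x\restriction\alpha)$ into $L_{\beta_\alpha}$, where the transitivity encoded in $\psi^Q(\alpha)$ combined with $\psi(\eta\restriction\alpha,\xi\restriction\alpha,x\restriction\alpha)$ yields $\psi(p,\xi\restriction\alpha,x\restriction\alpha)$, so $L_{\beta_\alpha}$ itself is the required witness. In the second case upward $\Sigma_1$-absoluteness instead lifts $\psi(\eta\restriction\alpha,\xi\restriction\alpha,x\restriction\alpha)$ from $L_{\beta_\alpha}$ into $L_\beta$, and then $\psi^Q(\alpha)$ applied inside $L_\beta$ to the triple $p,\eta\restriction\alpha,\xi\restriction\alpha$ produces $\psi(p,\xi\restriction\alpha,x\restriction\alpha)$ in $L_\beta$.

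The main subtlety I expect is precisely this case split. Since $r(\alpha)$ and $\psi^Q(\alpha)$ are not upward absolute — they can fail in a larger $L_{\beta'}$ — one cannot simply take $L_{\max(\beta,\beta_\alpha)}$ as a uniform witness. The saving point is that whichever of $L_{\beta_\alpha}$ and $L_\beta$ is the larger model already contains all four ingredients — $\psi(p,\eta\restriction\alpha)$, $\psi(\eta\restriction\alpha,\xi\restriction\alpha)$, $\psi^Q(\alpha)$ and $r(\alpha)$ — so transitivity of the quasi-order can be applied on the spot. Everything else should be routine: $D$ is a club by the usual continuity argument for Skolem hulls, and $\Sigma_1$-formulas are upward absolute between transitive $\in$-end-extensions of the form $L_\beta\subseteq L_{\beta'}$.
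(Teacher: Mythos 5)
Your proposal is correct and follows essentially the same route as the paper: a Skolem hull $H(\alpha)=\Sk(\alpha\cup\{\kappa,\eta,\xi,x\})^{L_\theta}$ collapsed to some $L_{\beta_\alpha}$ witnessing $\psi(\eta\restriction\alpha,\xi\restriction\alpha,x\restriction\alpha)$ on a club, followed by an application of transitivity via $\psi^Q(\alpha)$ inside a single level of $L$. Your case split on $\beta\le\beta_\alpha$ versus $\beta>\beta_\alpha$ is exactly the paper's step of passing to $L_{\max\{\beta_1,\beta_2\}}$ (with your justification making explicit why the non-upward-absolute conjuncts $\ZF^\diamond$, $r(\alpha)$, $\psi^Q(\alpha)$ survive, namely that each witness certifies them independently), so there is nothing to correct.
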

  
  \begin{proof}
    Suppose $\psi(\eta,\xi,x)=\exists k\varphi(k,\eta,\xi,x)$ holds
    and let $k$ witness that. Let $\theta$ be a cardinal large
    enough such that
    $L_\theta\models \ZF^\diamond\wedge \varphi(k,\eta,\xi,x)\wedge
    r(\kappa)$.
    For all $\alpha<\k$ let
    $H(\alpha)=\Sk(\alpha\cup\{\k,k,\eta,\xi,x\})^{L_\theta}$. The set
    $D=\{\alpha<\k\mid H(\alpha)\cap\k=\alpha\wedge
    H(\alpha)\models\psi^Q(\a)\}$
    is a club. Using the Mostowski collapse we have that
    $$D'=\{\alpha<\k\mid \exists \beta>\alpha(L_\beta\models \ZF^\diamond\wedge \varphi(k
    \restriction \alpha,\eta\restriction \alpha,\xi\restriction
    \alpha,x\restriction \alpha) \wedge r(\alpha)\wedge
    \psi^Q(\a))\}$$
    contains a club. For all $\alpha\in D'$ and $p\in T_{\eta,\alpha}$
    we have that
    $$\exists \beta_1>\alpha(L_{\beta_1}\models \ZF^\diamond\wedge
    \psi(p,\eta\restriction \alpha,x\restriction \a)\wedge
    r(\alpha)\wedge \psi^Q(\a))$$ and
    $$\exists \beta_2>\alpha(L_{\beta_2}\models \ZF^\diamond\wedge \psi(\eta\restriction \alpha,
    \xi\restriction \alpha,x\restriction\a)\wedge r(\alpha)\wedge
    \psi^Q(\a)).$$
    Therefore, for $\beta=\max\{\beta_1,\beta_2\}$ we have that
    $$L_{\beta}\models \ZF^\diamond\wedge \psi(p,\eta\restriction \alpha,x\restriction \a)\wedge
    \psi(\eta\restriction \alpha,\xi\restriction \alpha,x\restriction
    \a)\wedge r(\alpha)\wedge \psi^Q(\a).$$
    Since $\psi^Q(\a)$ holds and so transitivity holds for
    $\psi(\eta,\xi)$ in $L_\beta$, we conclude that
    $$L_{\beta}\models \ZF^\diamond\wedge \psi(p,\xi\restriction
    \alpha,x\restriction\a)\wedge r(\alpha)\wedge \psi^Q(\a)$$
    so $p\in T_{\xi,\alpha}$ and
    $T_{\eta,\alpha}\subseteq T_{\xi,\alpha}$. This holds for all
    $\alpha\in D'$.
  \end{proof}
  
  By the previous claim, we conclude that if $\eta\ Q\ \xi$, then
  there is a $\mu$-club $C$ such that for every $\a\in C$ it holds
  that
  $\chi_\a\in T_{\eta,\alpha}\Rightarrow \chi_\a\in T_{\xi,\alpha}$.
  Therefore
  $(\F(\eta)^{-1}\{1\}\backslash\F(\xi)^{-1}\{1\})\cap
  C=\emptyset$,
  and $\F(\eta)\ \qo^{\mu}\ \F(\xi)$.
  
  For the other direction, suppose $\neg\psi(\eta,\xi,x)$ holds. Let
  $S=\{\a\in S_\mu^\k\mid X_\a=\eta^{-1}\{1\}\cap\a\}$. Since
  $(X_\gamma)_{\gamma\in S_\mu^\k}$ is a diamond sequence, $S$ is a
  stationary set. By Lemma \ref{reflection-in-L} we know that
  $S\backslash A$ is stationary, where
  $$A=\{\a\in C_\Diamond\cap\k \mid \exists \beta>\a(L_\beta\models
  \ZF^\diamond\wedge\psi(\eta\restriction
  \a,\xi\restriction\a,x\restriction\a)\wedge r(\a))\}.$$
  Since for all $\a\in S\backslash A$ we have that
  $X_\a=\eta^{-1}\{1\}\cap\a$, so $\chi_\a\in T_{\eta,\alpha}$.
  We conclude that for all $\a\in S\backslash A$,
  $\F(\eta)(\alpha)=1$. On the other hand, for all
  $\a\in S\backslash A$ it holds that
  $$\forall\beta>\a(L_\beta\not\models
  \ZF^\diamond\wedge\psi(\eta\restriction\a,\xi\restriction\a,x\restriction\a)\wedge
  r(\a))$$ so
  $$\forall\beta>\a(L_\beta\not\models \ZF^\diamond\wedge\psi(\chi_\a,\xi\restriction\a,x
  \restriction\a)\wedge r(\a)).$$ Therefore
  $$\forall\beta>\a(L_\beta\not\models \ZF^\diamond\wedge\psi(\chi_\a,\xi\restriction\a,x
  \restriction\a)\wedge r(\a)\wedge \psi^Q(\a))$$
  we conclude that $\chi_\a\not\in T_{\xi,\alpha}$, and
  $\F(\xi)(\alpha)=0$. Hence, for all $\a\in S\backslash A$,
  $\F(\eta)(\alpha)=1$ and
  $\F(\xi)(\alpha)=0$. Since $S\backslash A$ is stationary,
  we conclude that
  $\F(\eta)^{-1}\{1\}\backslash\F(\xi)^{-1}\{1\}$ is
  stationary and
  $\F(\eta)\ \not\qo^{\mu}\ \F(\xi)$.
\end{proof}

%%%%%%%%%%%%%%%%%%%%%%%%%%%%%

\section{Corollaries to Theorem~\ref{thm:QOV=L}}
\label{sec:Corollaries}

\subsection{$\Sii$-completeness of $E^2_{\mu}$ in $L$}

\begin{theorem}[$V=L$, $\k>\o$]\label{thm:NScomV=L}
  $\qo^{\NS}$ is a $\Sigma_1^1$-complete quasi-order.
\end{theorem}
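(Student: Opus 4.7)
The plan is to deduce this theorem directly from Theorem~\ref{thm:QOV=L} together with Fact~\ref{reg-red-NS}, without any further set-theoretic work.

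First, I would fix an arbitrary regular cardinal $\mu<\k$ (for instance $\mu=\o$, which is always available since $\k>\o$). Recall that $\qo^\mu$ is by definition $\qo^S$ for $S=S^\k_\mu$, the set of $\mu$-cofinal ordinals below $\k$, and that $S^\k_\mu$ is a stationary subset of $\k$. Then $S^\k_\mu\subseteq \k$ are both stationary in $\k$, so Fact~\ref{reg-red-NS} applies and yields
$$\qo^{\mu}\ =\ \qo^{S^\k_\mu}\ \le_B\ \qo^{\k}\ =\ \qo^{\NS}.$$

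Next, Theorem~\ref{thm:QOV=L} (applied under the standing assumption $V=L$) tells us that $\qo^\mu$ is $\Sii$-complete, i.e.\ every $\Sii$ quasi-order on a Borel subspace of $\k^\k$ Borel-reduces to $\qo^\mu$. Since Borel-reducibility is transitive, composing with the reduction above gives that every $\Sii$ quasi-order Borel-reduces to $\qo^{\NS}$. Combined with the observation (noted right after Definition~\ref{def:ISO}) that $\qo^{\NS}$ is itself a $\Sii$ quasi-order on $2^\k$, this yields $\Sii$-completeness of $\qo^{\NS}$.

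There is no real obstacle here: all the content lives in Theorem~\ref{thm:QOV=L}, and the present theorem is merely the observation that weakening the stationary set from $S^\k_\mu$ to all of $\k$ can only make the quasi-order harder (in the Borel-reducibility sense), which is exactly what Fact~\ref{reg-red-NS} formalises.
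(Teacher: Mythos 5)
Your proposal is correct and is exactly the paper's argument: the paper's proof of this theorem is the one-line observation that it follows from Fact~\ref{reg-red-NS} (applied with $S=S^\k_\mu\subseteq S'=\k$) together with Theorem~\ref{thm:QOV=L} and transitivity of $\le_B$. You have simply spelled out the same two ingredients in more detail.
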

\begin{proof}
  Follows from Fact \ref{reg-red-NS} and Theorem~\ref{thm:QOV=L}.
\end{proof}

\begin{theorem}[$V=L$]\label{thm:maincor}
  Let $\mu$ be a regular cardinal below $\k$, then
  $E^2_{\mu}$ is a $\Sigma^1_1$-complete equivalence
  relation.
\end{theorem}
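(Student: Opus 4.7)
The plan is to deduce this directly from Theorem~\ref{thm:QOV=L}, using the observation that an equivalence relation is in particular a quasi-order and that $E^2_\mu$ is exactly the symmetrization of $\qo^\mu$.

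First I would take an arbitrary $\Sigma^1_1$ equivalence relation $E$ on some Borel subspace $X\subseteq \k^\k$. Being reflexive and transitive, $E$ qualifies as a $\Sigma^1_1$ quasi-order, so by Theorem~\ref{thm:QOV=L} there is a Borel reduction $f\colon X\to 2^\k$ with
\[
\eta\, E\, \xi \iff f(\eta)\,\qo^\mu\, f(\xi)
\]
for all $\eta,\xi\in X$. My claim is that the same $f$ is a Borel reduction from $E$ to $E^2_\mu$.

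For the forward direction, suppose $\eta\, E\, \xi$. Then $f(\eta)\,\qo^\mu\, f(\xi)$, and by symmetry of $E$ we also have $\xi\, E\, \eta$, hence $f(\xi)\,\qo^\mu\, f(\eta)$. Since by definition $\zeta\, E^2_\mu\, \zeta'$ iff $\zeta\,\qo^\mu\,\zeta'$ and $\zeta'\,\qo^\mu\,\zeta$, this gives $f(\eta)\, E^2_\mu\, f(\xi)$. Conversely, if $f(\eta)\, E^2_\mu\, f(\xi)$, then in particular $f(\eta)\,\qo^\mu\, f(\xi)$, so the biconditional provided by the reduction yields $\eta\, E\, \xi$. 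Since $f$ is Borel by construction, this exhibits the required Borel reduction and $E^2_\mu$ is $\Sigma^1_1$-complete.

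There is essentially no obstacle here: all the combinatorial work was carried out in Theorem~\ref{thm:QOV=L}, and this corollary only uses that the symmetrization of a reduction to $\qo^\mu$ automatically yields a reduction to $E^2_\mu$ once the source relation is symmetric. The only point worth checking carefully is that the target of $f$ in Theorem~\ref{thm:QOV=L} is indeed $2^\k$ and not $\k^\k$ (so that the reduction lands in the domain of $E^2_\mu$), but this is explicit in the statement of that theorem.
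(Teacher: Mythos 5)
Your proposal is correct and is exactly the argument the paper intends: its proof of Theorem~\ref{thm:maincor} is the one-line observation that $E^2_\mu$ is the symmetrization of $\qo^\mu$ (as recorded in Definition~\ref{def:ISO}), so a reduction of a symmetric relation to $\qo^\mu$ is automatically a reduction to $E^2_\mu$. You have simply written out the routine verification that the paper leaves implicit.
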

\begin{proof}
  This follows from Theorem \ref{thm:QOV=L}, because
  $E^2_{\mu}$ is a symmetrization of the
  quasi-order~$\qo^\mu$.
\end{proof}

The above result cannot be proved in ZFC. It was shown in
\cite[Thm~56]{FHK} that if $\k$ is not a successor of a singular
cardinal, then in a cofinality preserving forcing extension
$E^2_{\mu_1}$ and $E^2_{\mu_2}$ are
$\le_B$-incomparable for regular cardinals $\mu_1<\mu_2<\k$.

Theorem~\ref{thm:NScomV=L} gives consistently a positive answer to
``Given a weakly compact cardinal $\k$, is $\qo^{\NS}$ complete?''
\cite[Q.~11.4]{Luc}.  Theorem~\ref{thm:maincor} answers the questions
``Is it consistently true that $E^2_{\mu}\le_B E^2_{\lambda}$ for
$\lambda<\mu$?''  \cite{FHK},\cite[Q.~3.47]{KLLS} (take $\l=\o$,
$\mu=\o_1$ and $\k=\o_2$), and gives consistently a positive answer to
``Is $E^\k_{\mu}$ Borel-reducible to $E^2_{\mu}$ for a
regular~$\mu$?''  \cite[Q.~15]{FHK15}, \cite[Q.~3.46]{KLLS}.

\subsection{$\Sii$-completeness of $\qo_{\DLO}$ and $\qo_{G}$ in~$L$}

\cite[Q.~11.3]{Luc} asks ``Given a weakly compact cardinal $\k$, is
$\qo_{\DLO}$ complete for $\Sii$ quasi-orders? What about arbitrary
regular cardinals $\k$?'' In this section we apply
Theorem~\ref{thm:QOV=L} to show that the answer is positive if~$V=L$.
To do that we first have to establish a general theorem about 
$\qo_{\DLO}$:

\begin{theorem}\label{thm:ReducionNStoDLO}
  Suppose that for all $\l<\k$ we have $\l^\o<\k$. Then there is a
  continuous reduction of $\qo^\o$ to~$\qo_{\DLO}$.
\end{theorem}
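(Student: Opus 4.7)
The goal is to assign continuously to each $\eta\in 2^\k$ a dense linear order $L_\eta$ of cardinality $\k$ so that embeddability among the $L_\eta$'s captures non-stationary inclusion of the characteristic patterns on $S^\k_\o$. The plan is to encode each coordinate $\alpha\in S^\k_\o$ as a \emph{realized vs.~unrealized $(\o,\o)$-gap} in a fixed scaffold DLO. First I would construct, once and for all, a scaffold dense linear order $L^*$ of size $\k$, equipped with a distinguished family of pre-gaps $(g_\alpha)_{\alpha\in S^\k_\o}$ of character $(\o,\o)$, each placed at a canonically identifiable ``height'' $\alpha$ and designed to be the unique $(\o,\o)$-pre-gap of $L^*$ at that height. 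The hypothesis $\l^\o<\k$ for all $\l<\k$ is used during the inductive construction to control the number of $\o$-sequences available at each level, so that $L^*$ has cardinality $\k$ and the distinguishing feature of $g_\alpha$ can be enforced level-by-level. Then set
$$L_\eta \ :=\ L^* \cup \{p_\alpha \mid \alpha\in S^\k_\o,\ \eta(\alpha)=1\},$$
where each $p_\alpha$ is inserted into the gap $g_\alpha$. Density and the absence of endpoints in $L_\eta$ are routine, and continuity of $\eta\mapsto L_\eta$ under the coding of Definition~\ref{def:coding} is immediate, since the coded order on any initial segment of $\k$ depends only on a bounded portion of $\eta$.

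For the forward direction, assume $\eta \ \qo^\o\ \xi$ and fix an $\o$-club $C$ disjoint from $(\eta^{-1}\{1\}\setminus \xi^{-1}\{1\})\cap S^\k_\o$. I would define $f\colon L_\eta\hookrightarrow L_\xi$ by embedding the scaffold $L^*\subseteq L_\eta$ identically into $L^*\subseteq L_\xi$ and sending each $p_\alpha\in L_\eta$ with $\alpha\in C\cap S^\k_\o$ and $\eta(\alpha)=1$ to the corresponding $p_\alpha\in L_\xi$, which exists because $\xi(\alpha)=1$ on $C$. The non-stationarily many remaining fillers $p_\alpha$ with $\alpha\notin C$ are absorbed into the density of $L_\xi$ by a routine back-and-forth.

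The main obstacle is the converse. Given an embedding $f\colon L_\eta\hookrightarrow L_\xi$, a closing-off argument should produce a club $C_f\subseteq \k$ such that for every $\alpha\in C_f$, $f$ maps the part of $L_\eta$ at heights below $\alpha$ into the part of $L_\xi$ at heights below $\alpha$. For $\alpha\in C_f\cap S^\k_\o$ with $\eta(\alpha)=1$, the element $p_\alpha$ realizes an $(\o,\o)$-gap of $L_\eta$ at height $\alpha$; hence $f(p_\alpha)$ realizes an $(\o,\o)$-gap of $L_\xi$ at height $\alpha$, and by the rigidity built into $L^*$ this gap must be $g_\alpha$, forcing $\xi(\alpha)=1$. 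Thus $(\eta^{-1}\{1\}\setminus \xi^{-1}\{1\})\cap S^\k_\o$ is disjoint from $C_f$ and so non-stationary. The hardest step is securing the rigidity of the scaffold: one must rule out accidental $(\o,\o)$-pre-gaps at height $\alpha$ other than $g_\alpha$, for otherwise $f(p_\alpha)$ could fill a decoy gap in $L^*$ rather than $g_\alpha$ itself. This is precisely where the cardinal arithmetic hypothesis $\l^\o<\k$ is needed, to keep the set of potential $(\o,\o)$-pre-gaps at each level of the scaffold of size $<\k$ and thereby permit a diagonal construction of $L^*$ with the required uniqueness property.
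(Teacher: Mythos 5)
Your encoding has an internal contradiction that breaks the forward direction. You require the scaffold $L^*$ to be rigid in the sense that $g_\a$ is the \emph{unique} $(\o,\o)$-pre-gap at height $\a$ (you need this for the converse), and you propose to witness $\eta\;\qo^\o\;\xi$ by an embedding that is the identity on $L^*$ and ``absorbs'' the non-stationarily many extra fillers $p_\a$ (those with $\eta(\a)=1$, $\xi(\a)=0$) ``into the density of $L_\xi$''. But density of a linear order does not fill its $(\o,\o)$-gaps: if $f$ is the identity on $L^*$, then $f(p_\a)$ must lie strictly between the two $\o$-sequences converging to $g_\a$, i.e.\ it must fill the gap $g_\a$ \emph{of $L_\xi$} --- and that gap is unfilled precisely because $\xi(\a)=0$ and (by your own rigidity requirement) no other point of $L^*$ fills it. So no such embedding exists, and ``a routine back-and-forth'' cannot rescue this: the embedding must be allowed to \emph{move} the scaffold, shifting material at height $\a$ to some other height where $L_\xi$ has room. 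This shifting mechanism is the essential missing idea. The paper supplies it by fixing a ``sparse'' $\o$-club $G$ with arbitrarily long gaps, replacing $A$ by $A_G=((A\cap G)\cup(\k\setminus G))\setminus\{\o\}$ so that the target order contains arbitrarily long intervals of occupied heights, and proving (Claim~\ref{cl:shift}) that $A\setminus B$ is $\o$-non-stationary iff there is a strictly increasing continuous $f\colon\k\to\k$ with $f[A_G]\subseteq B_G$; the embedding is then $p\mapsto f\circ p$ on a Baumgartner-style order $L(A_G)$ of increasing $\o$-sequences with supremum in $A_G$, where the invariant is whether \emph{any} point exists at height $\a$, not whether a designated gap is filled.

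The converse also rests on two unestablished steps. First, an embedding $L_\eta\to L_\xi$ need not map scaffold to scaffold, so the image pre-gap $(f(a^\a_n)),(f(b^\a_n))$ filled by $f(p_\a)$ is a pre-gap of $L_\xi$, not of $L^*$, and the rigidity of $L^*$ alone does not identify it with $g_\a$; you would need the much stronger statement that \emph{every} order $L_\xi$ in your family has no filled $(\o,\o)$-pre-gap at height $\a$ other than (possibly) $g_\a$, which is in direct tension with the absorption room the forward direction needs. Second, the existence of the rigid scaffold itself is deferred to an unspecified ``diagonal construction''; this is where all the combinatorial work lives, and it is not clear it can be done while keeping $L^*$ dense. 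The paper's converse avoids these issues but is itself delicate: it closes off under five conditions (height preservation, unboundedness, and boundedness of the ``minimal/maximal'' families $\rho(g)$, $\lambda(g)$ of finite approximations) --- this is where $\l^\o<\k$ is actually used, to make $|L(A_G)|\le\k$ and the club of closure points exist --- and then derives a contradiction at the least relevant closure point. As it stands, your proposal needs both the shift mechanism and a concrete rigidity construction before either direction goes through.
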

\begin{proof}
  Fix an $\o$-club $G\subseteq S^\k_\o\setminus (\o+1)$ with the property that for all
  $\alpha<\k$ and all $\beta<\k$ there exists $\g<\k$ with
  $\beta<\g<\k$ such that $[\g,\g+\a]\cap G=\es$, where
  $[\g,\g+\a]=\{\delta < \k \mid \g\le \delta\le\g+\a\}$, thus $G$ is
  in a sense ``sparse''. Such a $G$ can be obtained by constructing a
  sequence $(\g_\a)_{\a<\k}$ as follows. Let $\g_0=\o+1$, for successor
  $\b=\a+1$ let $\g_{\b}=\g_\a+\g_\a$ and for limit $\b$ let
  $\g_\b=\sup\{\g_\a\mid \a<\b\}$. Then let $G=\{\g_\a\mid \a<\k\}\cap S^\k_\o$,
  For a subset $A\subseteq \k$, denote
  $$A_G=((A\cap G)\cup (\k\setminus G))\setminus \{\omega\}$$ 
  On the one hand $A_G$ is
  equivalent to $A$ up to the $\o$-non-stationary ideal. On the other
  hand $A_G$ contains arbitrarily long intervals. Note also that $A_G$
  contains all ordinals of uncountable cofinality, $S^\k_{>\o}\subset A_G$.

  For ordinals $\a,\b$, we say that a function $f\colon \a\to\b$ is
  \emph{continuous} if it is continuous with respect to the order
  topology, that is, for all increasing sequences
  $(\g_\delta)_{\delta<\l}\subset\a$, $\l<\a$, we have
  $$\sup\{f(\g_\delta)\mid \delta<\l\} = f(\sup\{\g_\delta\mid \delta<\l\}).$$

  \begin{claim}\label{cl:shift}
    Suppose $A$ and $B$ are subsets of $\k$. Then $A\setminus B$ is
    $\o$-non-stationary if and only if there exists a strictly
    increasing continuous $f\colon \k\to\k$ such that
    $$f[A_G]\subseteq B_G$$
  \end{claim}
  \begin{proof}
    From the definition of $A_G$ and $B_G$ we see that
    $A_G\setminus B_G=(A\setminus B)\cap G\cap S^\k_\o$. Since $G$ is
    an $\o$-club, we have that $A_G\setminus B_G$
    is $\o$-stationary if and only if $A\setminus B$ is. For any
    $f\colon \k\to\k$ which is increasing and continuous the set
    $C_f=\{\a<\k\mid f(\a)=\a\}$ is club. Thus, if $A\setminus B$ is
    $\o$-stationary, then
    $(A_G\cap C_f)\setminus B_G=(A_G\setminus B_G)\cap C_f$ is also
    $\o$-stationary and therefore non-empty. This proves the direction
    ``from right to left'' of the claim.

    Assume now that $A\setminus B$ is not $\o$-stationary and let
    $C_1\subseteq S^\k_\o$ be an $\o$-club such that
    $A\cap C_1\subseteq B$. Let $C=C_1\cap G$. Note that now not only
    $A\cap C\subseteq B$, but also $A_G\cap C\subseteq B_G$.  We will
    define $f\colon \k\to\k$ by inductively building a sequence of
    strictly increasing continuous functions $f_\a\in \k^{<\k}$ such
    that 
    \begin{enumerate}
    \item if $\a<\b$, then $f_\a\subset f_\b$,
    \item the domain of $f_\a$ is a successor ordinal $\e+1$ for some $\e\in C\cup S^\k_{>\o}$,
      where $S^\k_{>\o}$ is the set of ordinals below $\k$ with uncountable cofinality.
    \item if $\a<\b$, then $\ran(f_\a)\subset \dom(f_\b)$,
    \item if $f_\a(\delta)=\delta$, then $\delta\in C\cup S^\k_{>\o}$, 
    \item if $f_\a(\delta)\ne \delta$, then $f_\a(\delta)\in B_G$.
    \end{enumerate}
    Before constructing this sequence let us show that
    $f=\Cup_{\a<\k}f_\a$ will be the desired function. It is strictly
    increasing and continuous, because $f_{\a}$ are.  To show that
    $f[A_G]\subset B_G$, suppose that $\gamma\in f[A_G]$.  Now
    $\gamma=f(\delta)$ for some $\delta\in A_G$ and clearly
    $\gamma= f_\a(\delta)$ for sufficiently large~$\a$.  By (5), if
    $\gamma=f_\a(\delta)\ne \delta$, we have $\gamma\in B_G$ and we
    are done. So we may assume that $\gamma=f_\a(\delta)=\delta$.
    Since $\delta\in A_G$ we now also have $\g\in A_G$.  By (4) we
    have $\g\in C\cup S^\k_{>\o}$. If $\g\in C$, then since
    $C\cap A_G\subseteq B_G$, we have $\g\in B_G$ so we are done.
    If $\g\in S^\k_{>\o}$, then by the definition of $G$
    it must be the case that $\g\in \k\setminus G$. But $\k\setminus G\subset B_G$,
    so again $\g\in B_G$.

    Thus, it remains to construct the sequence $(f_\a)_{\a<\k}$
    satisfying (1)--(5).  Let $f_0=\es$. If $f_{\a}$ is defined, then
    define $f_{\a+1}$ as follows. Let $\e_\a=\max\dom f_\a$. By (2)
    $\e_\a$ is well defined and we have $\dom f_\a=[0,\e_\a]$ and
    $\e_\a\in C\cup S^\k_{>\o}$.  Let $\e_{\a+1}$ be some ordinal such that
    $\e_{\a+1}>f_\a(\e_\a)$ and $\e_{\a+1}\in C$. Then find
    $\g_0>\e_{\a+1}$ such that $[\g_0,\g_0+\e_{\a+1}]\subset B_G$
    which is possible by the definition of $G$ and the fact that
    $\k\setminus G\subset B_G$. Now define for all $\delta\le \e_{\a+1}$
    
    $$f_{\a+1}(\delta)=
    \begin{cases}
      f_\a(\delta)& \text{if }\delta\le\e_\a\\
      \g_0+\delta & \text{otherwise},
    \end{cases}$$
    Clearly (1) and (2) are satisfied for $f_{\a+1}$. Since $\g_0>0$, we have
    $f_{\a+1}(\delta)>\delta$ for all
    $\delta\in [\e_\a+1, \e_{\a+1}]$, so if $f_\a$ satisfies (4), 
    then so does $f_{\a+1}$. Because of the choice of $\g_0$, (5)
    is satisfied. Also (3) is satisfied by the choice of $\e_{\a+1}$.

    If $\b$ is a limit and $f_\a$ is defined for $\a<\b$, then let
    $\e_\b=\sup_{\a<\b}\e_{\a}$.  From (3) and that $f_\a$ are
    increasing it follows that
    $$\e_{\b}=\sup_{\a<\b}\ran f_{\a}=\sup_{\a<\b}\dom f_{\a}.$$
    The domain of $f_\b$ is $\e_\b + 1$ and it is defined by:  
    $$f_{\b}(\delta)=
    \begin{cases}
      f_\a(\delta)& \text{if }\delta<\e_\b,\text{ for some }\a\text{ such that }\delta<\e_\a<\e_\b\\
      \delta & \text{otherwise},
    \end{cases}$$
    Clause (3) ensures that this is well-defined.  Clauses (1) and (3)
    are clearly satisfied for~$f_\b$. If $\e_\b$ has cofinality $\o$,
    then (2) is satisfied, because $\e_\b$ is the limit of elements of
    $C$ and $C$ is $\o$-club. Otherwise, if $\cf(\e_\b)>\o$, then (2)
    is trivially satisfied. The only new element in $\dom f_{\b}$ that
    is not in $\dom f_{\a}$ for any $\a<\b$ is $\e_\b$ and since
    $\e_\b\in C\cup S^\k_{>\o}$, (4) and (5) are satisfied by
    induction.
  \end{proof}

  For every $p,q\in \k^{\le\o}$ define $p\prec q$ if either
  $p\supset q$ or there exists $n<\o$ such that $p(n)\ne q(n)$ and for
  the smallest such $n$ we have $p(n)<q(n)$.  This defines a linear
  order on the set $C(\k^{\le\o})$ of all strictly increasing
  functions $p\in \k^{\le\o}$.
  
  Now for $A\subseteq\k$ define the linear order $L(A)$ to be
  the set
  $$\{p\in C(\k^{\le\o})\mid \dom p=\o\text{ and }
  \sup\ran p \in A\text{ and }p(0)=0\}$$
  equipped with the order~$\prec$.  Note that because elements of
  $L(A)$ have domain $\o$, the condition $p\supset q$ in the
  definition of $p\prec q$ is never relevant for them.  This is a
  modification of a construction given by Baumgartner \cite{Bau}.
  Clearly $A\subseteq B$ implies $L(A)\subseteq L(B)$.  We will show
  that $A\mapsto L(A_G)$ is a reduction of $\qo^\o$
  to~$\qo_{\DLO}$. By the definition of $A_G$, the limit ordinals of $A_G$
  (which are the only ones that matter in the definition of $L(A_G)$) are all
  greater $\o$ which ensures that there is no smallest
  element in~$L(A_G)$. Also clearly $L(A_G)$ does not have a greatest
  element, because $A_G\cap S^\k_\o$ is unbounded and it is dense by the following
  argument. If $p\prec q$ for elements of $L(A_G)$, then (because
  $\dom p=\dom q=\omega$) there is $n<\o$ with $p(n)<q(n)$. Let
  $p'\colon\o\to\k$ be defined by $p'(k)=p(k)$ for $k\le n$ and
  $p'(k)=p(k)+1$ otherwise. Then $p\prec p'\prec q$.
  
  If $f\colon \k\to\k$ is continuous and strictly increasing with
  $f(0)=0$ and $A\subseteq \k$ any set, the definition of $L(A)$
  implies that
  $$\{f\circ p\mid p\in L(A)\}\subseteq L(f[A]).$$ 
  
  Thus, if $f\colon \k\to\k$ is continuous and strictly increasing
  such that $f[A_G]\subseteq B_G$, then $p\mapsto f\circ p$ defines an
  embedding from $L(A_G)$ into $L(B_G)$. By Claim~\ref{cl:shift} such
  $f$ exists, if $A\qo^\o B$ (we do not lose generality by assuming
  $f(0)=0$).
  
  The other direction is essentially a simplification of the proof of
  Baumgartner Theorem 5.3(ii) \cite{Bau}. If $A\not\qo^\o B$, then, as
  noted above, also $A_G\not\qo^\o B_G$ and so $A_G\setminus B_G$ is
  $\o$-stationary. So it is sufficient to show that for
  \emph{any} unbounded $A,B\subseteq S^\k_\o$, if $A\setminus B$ is
  $\o$-stationary, then $L(A)$ cannot be embedded into $L(B)$. 
  
  So suppose that $A\setminus B$ is stationary and assume towards a
  contradiction that $h\colon L(A) \to L(B)$ preserves the
  ordering~$\prec$.  For any $X\subseteq C(\k^{\le\o})$, let
  $T(X)=\{p\in C(\k^{\le\o})\mid \exists q\in X(p\subseteq q)\}$.
  Note that for every strictly increasing $p\in \k^{<\o}$ with
  $p(0)=0$, we have $p\in T(L(A))$ and $p\in T(L(B))$.  For
  $g\in T(L(B))$, let
  $$\Right(g)=\{f\in L(A)\mid h(f)=g\text{ or }g\prec h(f)\},$$
  $$\Left(g)=\{f\in L(A)\mid h(f)\prec g\}.$$
  Let
  $$\rho(g)=\{f'\in T(\Right(g))\mid \text{ for all }g'\in T(\Right(g)), \text{ if }g'\prec f'
  \text{, then }f'\subseteq g'\},$$
  $$\lambda(g)=\{f'\in T(\Left(g))\mid \text{ for all }g'\in T(\Left(g)), \text{ if }f'\prec 
  g'\text{, then }g'\subseteq f'\}.$$
  Note that $\rho(g)$ and $\lambda(g)$ are linearly ordered by
  $\subset$. Intuitively $\rho(g)$ is a set of ``minimal'' elements of
  $T(\Right(g))$ and $\l(g)$ is the set of ``maximal'' elements of
  $T(\Left(g))$. Now let $C\subseteq S^\k_\o$ be the set of all
  $\a$ satisfying
  \begin{enumerate}[label=(\roman*)]
  \item\label{it1} for all $f\in L(A)$, $\sup\ran(f)<\a\iff\sup\ran(h(f))<\a$,
  \item\label{it2} $A\cap \a$ is unbounded in $\a$,
  \item\label{it3} if $g\in T(L(B))$ and $\sup\ran(g)<\a$, then
    $\sup\{\sup\ran(f)\mid f\in\rho(g)\}<\a$ and
    $\sup\{\sup\ran(f)\mid f\in\lambda(g)\}<\a$,
  \item\label{it4} if $g\in T(L(B))$, $f\in T(\Left(g))$,
    $\sup\ran(g),\sup\ran(f)<\a$, and there exists $\hat f\in \Left(g)$
    such that $f\prec \hat f$ and $\hat f\not\subset f$, then there exists
    such an $\hat f$ with $\sup\ran(\hat f)< \a$,
  \item\label{it5} if $g\in T(L(B))$, $f\in T(\Right(g))$,
    $\sup\ran(g),\sup\ran(f)<\a$, and there exists $\hat f\in \Right(g)$
    such that $\hat f\prec f$ and $f\not\subset \hat f$, then there exists
    such an $\hat f$ with $\sup\ran(\hat f)< \a$,
  \end{enumerate}
 
  Our cardinality assumption on $\k$ guarantees that $C$ is a club. We
  will show that $C\cap A\subseteq B$ which is a contradiction. Let
  $\a\in C\cap A$ and let $f\in L(A)$ be such that
  $\sup\ran(f)=\a$. We will show that $\sup\ran(h(f))=\a$ and so
  $h(f)\in L(B)$ and $\a\in B$.  Suppose not. If $\sup\ran (h(f))<\a$,
  then by \ref{it1}, $\sup\ran(f)<\a$ which is a contradiction. So we
  can assume that $\sup\ran(h(f))>\a$. Because we assumed that
  $p(0)=0$ for all functions in question, there is $n_0<\o$ such that
  $h(f)(n_0)<\a\le h(f)(n_0+1)$. Let
  $$g=h(f)\rest (n_0+1).\eqno(*)$$ 
  In particular
  $$\sup\ran(g)<\a.\eqno(**)$$ 
  For every $m<\o$, pick $\a_m\in A$ such that $f(m)<\a_m<\a$. Such
  $\a_m$ exists by \ref{it2}. Now for each $m$ fix $f_m$ with
  $\sup\ran(f_m)=\a_m$ and $f_m\supset f\rest(m+1)$. We have two
  cases: either (A) $\sup\{m<\o\mid f_m\in \Left(g)\}=\o$ or (B)
  $\sup\{m<\o\mid f_m\in \Right(g)\}=\o$. We will show that both (A)
  and (B) lead to a contradiction with~\ref{it3}.

  \begin{claim} (A) If there are infinitely many $m<\o$ with
    $f_m\in \Left(g)$, then for all $m<\o$ we have
    $f\rest(m+1)\in\lambda(g)$ which contradicts~\ref{it3}.
  \end{claim}
  \begin{proof}
    For every $m$, there is $m'>m$ such that $f_{m'}\in \Left(g)$ and
    since $f\rest(m+1)\subseteq f\rest(m'+1)\subset f_{m'}$, we have
    that $f\rest(m+1)\in T(\Left(g))$. Suppose that
    $f\rest(m+1)\notin \lambda(g)$ for some $m$. Then by the
    definition of $\lambda(g)$, there exists $\hat f\in T(\Left(g))$
    such that $f\rest(m+1)\prec \hat f$, but
    $\hat f\not\subset f\rest(m+1)$.  We can w.l.o.g. assume that
    $\hat f\in \Left(g)$ and further, by \ref{it4}, that
    $\sup\ran(\hat f)<\a$.
    
    Since $f\rest(m+1)\prec\hat f$ and
    $\hat f\not\subset f\rest(m+1)$, there is $n\le m$ such that
    $\hat f(n)>f(n)$ and $n$ is smallest such that
    $\hat f(n)\ne f(n)$. This $n$ witnesses that $f\prec\hat f$.  So
    we have $h(f)\prec h(\hat f)$.  The latter implies that for the
    first $n'<\o$ with $h(f)(n')\ne h(\hat f)(n')$ we have
    $h(\hat f)(n')> h(f)(n')$. If $n'>n_0$ ($n_0$ is defined at $(*)$)
    then $\sup\ran(h(\hat f))\ge h(\hat f)(n')>h(f)(n')\ge \a$, a
    contradiction with \ref{it1}.  So $n'\le n_0$ and $h(\hat f)(n')>h(f)(n')=g(n')$,
    so we have $g\prec h(\hat f)$.  But this implies that
    $\hat f\in \Right(g)$ which is a contradiction again.  This proves
    the claim.
  \end{proof}
  
  \begin{claim} (B) If there are infinitely many $m<\o$ with
    $f_m\in \Right(g)$, then for all $m<\o$ we have
    $f\rest(m+1)\in\rho(g)$ which contradicts~\ref{it3}.
  \end{claim}
  \begin{proof}
    For every $m$, there is $m'>m$ such that $f_{m'}\in \Right(g)$ and
    since $f\rest(m+1)\subset f\rest(m'+1)\subset f_{m'}$, we have
    that $f\rest(m+1)\in T(\Right(g))$. Suppose that
    $f\rest(m+1)\notin \rho(g)$ for some $m$. Then by the definition
    of $\rho(g)$, there exists $\hat f\in T(\Right(g))$ such that
    $\hat f\prec f\rest(m+1)$, but $f\rest(m+1)\not\subset \hat f$.
    We can again assume that $\hat f\in \Right(g)$ and, by \ref{it5},
    that $\sup\ran\hat f<\a$.  There exists $n\le m$ with
    $\hat f(n)<f(n)$ and $n$ is the smallest such that
    $\hat f(n)\ne f(n)$.

    The number $n$ witnesses that $\hat f\prec f$ and so we must have
    $h(\hat f)\prec h(f)$.  The latter implies that for the first $n'<\o$
    with $h(f)(n')\ne h(\hat f)(n')$ we have $h(\hat f)(n')<h(f)(n')$. If
    $n'>n_0$, then $g\subset h(\hat f)$
    and hence $h(\hat f)\prec g$ which is a contradiction with
    $\hat f\in \Right(g)$.
    
    So $n'\le n_0$ and $h(\hat f)(n')<h(f)(n')=g(n')$, and again
    $h(\hat f)\prec g$, contradiction. This proves the claim.
  \end{proof}
  This completes the proof of Theorem~\ref{thm:ReducionNStoDLO}.
\end{proof}

\begin{theorem}[$V=L$]\label{thm:DLOVisLSiiCompl}
  If $\k>\o$ is a regular cardinal which is not the successor of an
  $\o$-cofinal cardinal, then $\qo_{\DLO}$ is $\Sii$-complete.
\end{theorem}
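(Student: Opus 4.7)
The plan is to combine Theorem~\ref{thm:QOV=L} with Theorem~\ref{thm:ReducionNStoDLO} in a direct way. Theorem~\ref{thm:QOV=L} with $\mu=\o$ gives that under $V=L$ the quasi-order $\qo^\o$ is $\Sii$-complete, so it suffices to exhibit a Borel (in fact continuous) reduction of $\qo^\o$ to $\qo_{\DLO}$. Theorem~\ref{thm:ReducionNStoDLO} provides exactly such a reduction, but only under the cardinal arithmetic hypothesis $\l^\o<\k$ for every $\l<\k$. Thus the only thing to check is that this hypothesis is automatic in $L$ under the stated restriction on~$\k$.

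For this verification I would use GCH (which holds in $L$). If $\l<\k$ has uncountable cofinality, then by GCH $\l^\o=\l<\k$. If $\cf(\l)=\o$, then $\l^\o=\l^{\cf(\l)}=\l^+$, again by GCH. Hence the only way $\l^\o<\k$ could fail for some $\l<\k$ is that $\k=\l^+$ for some $\l$ with $\cf(\l)=\o$. By hypothesis $\k$ is not the successor of an $\o$-cofinal cardinal, so this case is excluded, and the hypothesis of Theorem~\ref{thm:ReducionNStoDLO} is satisfied.

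Putting everything together: given a $\Sii$ quasi-order $Q$, first apply Theorem~\ref{thm:QOV=L} to obtain a Borel reduction $Q\le_B \qo^\o$, then compose with the continuous reduction $\qo^\o\to \qo_{\DLO}$ supplied by Theorem~\ref{thm:ReducionNStoDLO}. The composition is a Borel reduction of $Q$ to $\qo_{\DLO}$, proving $\Sii$-completeness. I do not expect any genuine obstacle here, since the work has been done in the two preceding theorems; the only content of the proof is the cardinal arithmetic observation above, which is why the hypothesis on $\k$ appears in exactly the form stated.
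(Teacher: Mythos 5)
Your proposal is correct and is essentially identical to the paper's proof: both invoke Theorem~\ref{thm:QOV=L} to reduce the problem to reducing $\qo^\o$ to $\qo_{\DLO}$, and then verify via GCH in $L$ that the hypothesis $\l^\o<\k$ for all $\l<\k$ of Theorem~\ref{thm:ReducionNStoDLO} holds exactly when $\k$ is not the successor of an $\o$-cofinal cardinal. The paper merely states this cardinal-arithmetic fact without the explicit case split you give, so your write-up is, if anything, slightly more detailed.
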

\begin{proof}
  By Theorem~\ref{thm:QOV=L} it is sufficient to reduce $\qo^\o$ to
  $\qo_{\DLO}$. But since $V=L$ every cardinal $\k>\o$ which is not
  the successor of an $\o$-cofinal cardinal satisfies the assumption
  of Theorem~\ref{thm:ReducionNStoDLO}.
\end{proof}

\begin{corollary}
  If $\k>\o$ is a regular cardinal which is not the successor of an
  $\o$-cofinal cardinal, then the embeddability of graphs $\qo_{G}$ is
  $\Sii$-complete. 
\end{corollary}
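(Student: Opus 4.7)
The plan is to combine Theorem~\ref{thm:DLOVisLSiiCompl} with a Borel reduction $\qo_{\DLO} \le_B \qo_G$. Since $\Sii$-completeness is preserved upwards under Borel reducibility, this immediately yields that $\qo_G$ is $\Sii$-complete under the stated hypothesis on $\k$.

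To construct the reduction I would define a Borel map $\Gamma \colon \Mod^\k_{\DLO} \to \Mod^\k_G$ sending each dense linear order $\A_\eta$ on $\k$ to a graph on $\k$ whose vertices are partitioned (via a fixed bijection between $\k$ and a suitable index set) into \emph{order vertices} (one per element of $\k$) and a family of \emph{gadget vertices}, one finite block per pair $x <_\eta y$. The gadget connecting $x$ to $y$ is a short path $x - a_{x,y} - b_{x,y} - y$ with small asymmetric decorations (say a triangle) attached to $a_{x,y}$ but not to $b_{x,y}$, so that the two endpoints of the path can be distinguished graph-theoretically. Since each edge of the resulting graph depends on only a single value of $\eta$, the map $\Gamma$ is continuous, hence Borel.

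The key equivalence to verify is that $\A_\eta \qo_{\DLO} \A_\xi$ if and only if $\Gamma(\eta) \qo_G \Gamma(\xi)$. The forward direction is immediate: any order embedding $f$ lifts to a graph embedding sending each gadget on $(x,y)$ to the gadget on $(f(x),f(y))$. For the converse, rigidity of the gadgets forces any graph embedding $h$ to send order vertices to order vertices (auxiliary vertices can be characterised, e.g., by having small bounded degree and lying in a unique gadget, whereas every order vertex meets many gadgets), and to send each gadget to a gadget preserving orientation (this uses the asymmetric decorations). The induced map on order vertices is then an order embedding $\A_\eta \hookrightarrow \A_\xi$.

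The main obstacle is the choice of gadget: it must be rigid enough to force graph embeddings both to respect the order-vs-auxiliary partition and to preserve the orientation encoded by $x <_\eta y$, yet simple enough to be written as a continuous function of $\eta$. The rigidity argument reduces to a small amount of finite graph combinatorics applied to each individual gadget, and the Borel coding is routine once the indexing bijection between $\k$ and the vertex labels is fixed.
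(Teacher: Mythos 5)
Your proposal is correct and follows essentially the same route as the paper: invoke Theorem~\ref{thm:DLOVisLSiiCompl} and compose with a continuous gadget-coding of linear orders into graphs (the paper's Theorem~\ref{thm:ToGraphs} uses a two-vertex asymmetric gadget per pair $a<_L b$ and likewise distinguishes order vertices by their infinite degree). The only difference is the cosmetic choice of gadget, and the paper leaves the same finite rigidity verification to the reader that you defer.
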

\begin{proof}
  It is a well known folklore that both embeddability and isomorphism
  of any model class can be coded into graphs (e.g. the authors of
  \cite{friedman1989borel} assume this without proof in the countable
  case).  We will sketch a proof for the sake of completeness in the case
  of linear orders (Theorem~\ref{thm:ToGraphs}).
\end{proof}

\begin{theorem}\label{thm:ToGraphs}
  For every $\k\ge \o$ there is a continuous function
  $F\colon \Mod^\k_{\DLO}\to \Mod^\k_G$ which preserves
  both embeddability and isomorphism.
\end{theorem}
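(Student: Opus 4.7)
The plan is to use the standard trick of coding each linear order as a graph by attaching an asymmetric ``arrow gadget'' to each ordered pair. For $\eta\in\Mod^\k_{\DLO}$ with $\A_\eta=(\k,<_\eta)$, I define $F(\eta)$ to be the graph whose vertex set is partitioned into \emph{element vertices} $\{v_x\mid x<\k\}$ and, for each pair $x<_\eta y$, three \emph{gadget vertices} $a_{xy},b_{xy},p_{xy}$, with edges
\[ v_x\sim a_{xy},\quad a_{xy}\sim b_{xy},\quad b_{xy}\sim v_y,\quad a_{xy}\sim p_{xy}, \]
and no other edges. The whole graph is realized on the domain $\k$ via a fixed bijection $\k\sqcup(\k\times\k\times 3)\to\k$. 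Each adjacency bit of $F(\eta)$ depends on at most one bit of $\eta$ (namely, whether $x<_\eta y$ for the relevant pair), so $F$ is continuous.

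For the easy direction, any order embedding $g\colon\A_{\eta_1}\hookrightarrow\A_{\eta_2}$ extends to a graph embedding $\hat g\colon F(\eta_1)\to F(\eta_2)$ by $v_x\mapsto v_{g(x)}$ and $a_{xy},b_{xy},p_{xy}\mapsto a_{g(x)g(y)},b_{g(x)g(y)},p_{g(x)g(y)}$; this evidently preserves and reflects adjacency, and is a bijection whenever $g$ is.

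The core of the proof is the reverse direction. Given a graph embedding $h\colon F(\eta_1)\to F(\eta_2)$, I first observe that every element vertex $v_x$ has infinite degree in $F(\eta_1)$ (because $\A_{\eta_1}$ is a DLO, so $x$ has infinitely many elements on each side), whereas every gadget vertex of $F(\eta_2)$ has degree at most $3$. Since $h$ is an induced embedding, $h(v_x)$ has the same degree as $v_x$ and is therefore forced to be an element vertex of $F(\eta_2)$. Next, for any $x<_{\eta_1}y$ I analyse the image of the gadget $v_x,a_{xy},b_{xy},v_y,p_{xy}$: the vertex $h(a_{xy})$ must be a gadget vertex adjacent to $h(v_x)$ and of degree at least $3$ in $F(\eta_2)$, and the only such vertices in $F(\eta_2)$ are the $a$-vertices $a_{h(v_x)z}$ with $h(v_x)<_{\eta_2}z$. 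The orientation-reversing alternative, in which $h(a_{xy})$ would be some $b$-vertex $b_{w\,h(v_x)}$, is ruled out precisely because $b$-vertices have only two neighbours in $F(\eta_2)$ and therefore cannot accommodate the three distinct images of $v_x,b_{xy},p_{xy}$ under the injection $h$. Hence $h(v_y)=v_z$ for some $z$ with $h(v_x)<_{\eta_2}z$, so the induced map on element vertices is order-preserving. The same argument applied to bijective $h$ gives isomorphism preservation.

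The main obstacle is precisely this rigidity step, and in particular the role of the pendant $p_{xy}$: without it the gadget would be symmetric under $v_x\leftrightarrow v_y$ and a graph embedding could reverse the order. All other steps are routine counting and book-keeping about the fixed bijection used to place the construction on the domain $\k$.
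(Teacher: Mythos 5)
Your proposal is correct and is essentially the same argument as the paper's: both attach a finite, degree-asymmetric gadget to each ordered pair and use the dichotomy between the infinite degree of element vertices and the bounded degree of gadget vertices to force any graph embedding to respect the original order; the only difference is cosmetic (the paper's gadget uses two extra vertices per pair, with the degree-$3$ vertex joined to both endpoints, while yours uses a three-vertex path-with-pendant). One small imprecision: an embedding need not preserve degree exactly, only bound it from below, but that weaker fact is all your argument actually uses, so nothing breaks.
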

\begin{proof}
  Given a linear order $\left(L,<_L\right)$ we will construct a graph
  $F\left(L,<_L\right)=(G,R)$ where $R=R\left(L,<_L\right)$ is a
  binary symmetric irreflexive relation on~$G=G\left(L,<_L\right)$.
  This construction will be such that it preserves both the embeddability
  and the isomorphism relations. Moreover it will be easy to see that
  if $F$ is translated through coding into a function from $\Mod^\k_{\DLO}$
  to $\Mod^\k_G$ it becomes continuous.

  The domain of the graph $G=G\left(L,<_L\right)$ consists of a copy of
  the domain of $L$ plus two vertices for every pair $a,b\in L$ such that
  $a<_L b$. Formally $G=L\cup (<_L\!\times \{0,1\})$.  The relation
  $R=R\left(L,<_L\right)$ is defined so that for every $a<_L b$
  the connections between $a,b,((a,b),0)$ and $((a,b),1)$ are as
  shown in~\eqref{eq:GLgraph}:
  \begin{equation}
    \xymatrix{((a,b),0) \ar@{-}[r]\ar@{-}[d]\ar@{-}[dr] & ((a,b),1) \ar@{-}[d] \\
      a & b} 
    \label{eq:GLgraph}
  \end{equation}
  Now any embedding $g\colon L_1\to L_2$ induces 
  an embedding 
  $$\hat g\colon G\left(L_1,<_{L_1}\right)\to G\left(L_2,<_{L_2}\right)$$
  by
  $$\hat g(a)=
  \begin{cases}
    g(a) & \text{if } a\in L_1,\\
    ((g(c_1),g(c_2)),0) & \text{if } a=((c_1,c_2),0)\in\ <_{L_1}\!\times\{0\},\\
    ((g(c_1),g(c_2)),1) & \text{if } a=((c_1,c_2),1)\in\ <_{L_1}\!\times\{1\}.
  \end{cases}
  $$
  This $\hat g$ is an isomorphism if and only if $g$ is.  On the other
  hand any embedding $g$ from $G\left(L_1,<_{L_1}\right)$ to
  $G\left(L_2,<_{L_2}\right)$ maps elements of $L_1$ to elements of
  $L_2$, because elements of $L_k$ are precisely the elements of
  $G\left(L_k,<_{L_k}\right)$ with an infinite
  $R\left(L_k,<_{L_k}\right)$-degree, $k\in\{1,2\}$. It is left to the reader
  to verify that the way the graph is defined ensures that
  $g\rest L_1$ is an embedding from $\left(L_1,<_{L_1}\right)$ to
  $\left(L_2,<_{L_2}\right)$.  Again this embedding is an isomorphism
  if and only if $g$ is.
\end{proof}

\subsection{Dichotomy for countable first-order theories in~$L$}

In \cite{HKM} it was proved that if $V=L$, $\kappa$ is a successor of
an uncountable regular cardinal~$\l$, then
$\cong_{T_1}\ \le_c\ \cong_{T_2}$ and
$\cong_{T_2}\ \not\le_B\ \cong_{T_1}$ holds for all $T_1$
classifiable and $T_2$ non-classifiable. This result can be improved
using Theorem~\ref{thm:maincor} together with some results
from~\cite{FHK}:

\begin{theorem}\label{thm:FHK86} (\cite[Thm~86]{FHK})
  Suppose that for all $\gamma<\kappa$, $\gamma^\omega<\kappa$ and $T$
  is a stable unsuperstable complete countable theory. Then
  $E^2_{\omega}\le_c\ \cong_T$. \qed
\end{theorem}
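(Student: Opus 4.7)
The plan is to invoke Shelah's non-structure technology for stable unsuperstable theories, building for each $\eta \in 2^\k$ an Ehrenfeucht--Mostowski-type model $\A_\eta \models T$ whose isomorphism class encodes the $E^2_\o$-class of $\eta$, and to check that the assignment is continuous.

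The stability-theoretic ingredient is standard. Since $T$ is unsuperstable, there exist a type $p$ and an increasing sequence of sets $(A_n)_{n<\o}$ such that $p\rest A_{n+1}$ forks over $A_n$ for every $n$. Running the standard tree-indiscernibility machine over this data produces an indiscernible tree pattern $I$ of height $\o+1$, together with an Ehrenfeucht--Mostowski functor sending an order-tree $\tau$ to a model $\mathrm{EM}(\tau,I) \models T$ of size $|\tau|$, with the property that the isomorphism type of $\mathrm{EM}(\tau,I)$ is determined by the ``tree data'' of $\tau$ at cofinality~$\o$.

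For the coding step, for each $\eta \in 2^\k$ define a tree $\tau_\eta$ of height $\o+1$ whose shape records $\eta$: at $\o$-cofinal heights one places an extra top-level node precisely at those $\a<\k$ with $\eta(\a)=1$, and elsewhere the tree carries a rigid filler skeleton. The cardinal hypothesis $\g^\o<\k$ for $\g<\k$ guarantees $|\tau_\eta|=\k$, and crucially that $\tau_\eta \rest \a$ is determined by $\eta\rest\a$ on a club of~$\a$'s; letting $\A_\eta = \mathrm{EM}(\tau_\eta, I)$ and coding the result back into $2^\k$ by a fixed bijection (as in Definition~\ref{def:coding}) yields a continuous map $\eta \mapsto \A_\eta$ landing in $\Mod^\k_T$.

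The main obstacle is the equivalence $\A_\eta \cong \A_\xi \Longleftrightarrow \eta\, E^2_\o\, \xi$. The direction ``$\Leftarrow$'' is obtained by back-and-forth along an $\o$-club on which $\eta$ and $\xi$ agree, piecing together a partial isomorphism using the homogeneity of $\mathrm{EM}$-models. The direction ``$\Rightarrow$'' is where stability theory does the real work: any isomorphism $\A_\eta\to\A_\xi$ must preserve the non-forking dependence of the $\o$-indiscernible branches of the underlying tree, because in a stable unsuperstable theory the $\o$-branch data are invariants of the model up to parallelism of types. Pushing this invariance down to a club of $\o$-cofinal $\a<\k$ yields $\eta(\a)=\xi(\a)$ on an $\o$-club, hence $\eta\, E^2_\o\, \xi$. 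This final reflection argument, which rests on Shelah's dimension theory for regular types in stable unsuperstable theories, is the delicate step.
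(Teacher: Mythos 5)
This statement is not proved in the paper at all: it is imported verbatim as \cite[Thm~86]{FHK} and stamped with a \qed, so there is no in-paper argument to compare against. Measured against the actual proof in that source, your strategy is the right one -- code $\eta$ into a tree of height $\o+1$ whose $\o$-cofinal levels record $\eta$, feed the tree into Shelah's construction for unsuperstable theories, and use the hypothesis $\g^\o<\k$ to control the restrictions $\tau_\eta\rest\a$ on a club. So the skeleton matches.

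As a proof, however, the proposal has genuine gaps at both implications. For ``$\Leftarrow$'' you propose a back-and-forth on the \emph{models} ``using the homogeneity of EM-models''; EM-type models of an arbitrary stable unsuperstable $T$ are not homogeneous, and the correct route is to show that $\eta\,E^2_\o\,\xi$ forces the \emph{trees} $\tau_\eta$ and $\tau_\xi$ themselves to be isomorphic (a back-and-forth on the trees along the $\o$-club, using $\g^\o<\k$), and then appeal to functoriality of the construction. For ``$\Rightarrow$'', which is the entire content of the theorem, you only assert that an isomorphism ``must preserve the non-forking dependence of the $\o$-indiscernible branches'' and invoke ``dimension theory for regular types.'' Two problems: first, an isomorphism of the models need not respect the tree skeletons, so one must show that the relevant branch-counting data at $\o$-cofinal $\a$ is a genuinely model-theoretic invariant (this is where the models being $F^a_{\aleph_0}$-primary over the trees, and a counting argument over a club, do the work in \cite{FHK}); second, in a theory that is merely stable unsuperstable the types witnessing unsuperstability need not be regular, so the regular-type dimension calculus you cite is not directly available. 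The step you yourself label ``delicate'' is exactly the step that is missing.
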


\begin{corollary}[$V=L$]\label{cor:stricstable}
  Suppose that $\kappa$ is regular and not the successor of an
  $\o$-cofinal cardinal and $T$ is a stable unsuperstable complete
  countable theory. Then $\cong_{T}$ is a $\Sigma^1_1$-complete
  relation.
\end{corollary}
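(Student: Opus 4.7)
The plan is to combine Theorem~\ref{thm:maincor} with Theorem~\ref{thm:FHK86}. By Theorem~\ref{thm:maincor} applied with $\mu=\o$, under $V=L$ the equivalence relation $E^2_\o$ is $\Sii$-complete. Theorem~\ref{thm:FHK86} on the other hand provides a continuous reduction $E^2_\o \le_c\ \cong_T$ whenever $T$ is stable unsuperstable complete countable and $\gamma^\o<\k$ for every $\g<\k$. Since Borel-reducibility is transitive, putting these two facts together will yield that every $\Sii$ equivalence relation Borel-reduces to $\cong_T$.

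The only thing to check is that the cardinal arithmetic hypothesis $\gamma^\o<\k$ of Theorem~\ref{thm:FHK86} is implied by the assumptions of the corollary. Since $V=L$, GCH holds. If $\k$ is a limit cardinal (so inaccessible, being regular), then for any $\g<\k$ we have $\g^\o\le 2^\g=\g^+<\k$. If $\k=\l^+$ is a successor cardinal, then by assumption $\cf(\l)>\o$, so GCH gives $\l^\o=\l<\k$, and for $\g<\l$ we have $\g^\o\le\g^+\le\l<\k$. Hence in all admissible cases the hypothesis of Theorem~\ref{thm:FHK86} is satisfied.

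Combining the continuous reduction $E^2_\o\le_c\,\cong_T$ with the $\Sii$-completeness of $E^2_\o$ completes the proof. There is essentially no obstacle here beyond unpacking this cardinal arithmetic verification; all the heavy lifting has already been done in Theorem~\ref{thm:QOV=L}, its symmetrization Theorem~\ref{thm:maincor}, and the cited result from~\cite{FHK}.
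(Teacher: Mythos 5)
Your proposal is correct and follows exactly the paper's own route: the paper derives the corollary from Theorems~\ref{thm:FHK86} and~\ref{thm:maincor} together with GCH in~$L$, which is precisely your combination, and your explicit verification of the hypothesis $\gamma^\o<\k$ (splitting into the inaccessible and successor-of-an-uncountably-cofinal-cardinal cases) is the cardinal arithmetic the paper leaves implicit. No gaps.
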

\begin{proof}
  Follows from Theorems \ref{thm:FHK86} and~\ref{thm:maincor}
  and GCH in~$L$.
\end{proof}

\begin{theorem}(\cite[Thm~79]{FHK})
  \label{thm:FHK79}
  Suppose that $\kappa=\lambda^+=2^\lambda$ and
  $\lambda^{<\lambda}=\lambda$.
  \begin{enumerate}
  \item If $T$ is complete unstable or superstable with OTOP, then
    $E^2_{\lambda}\le_c\ \cong_T$.
  \item If $\lambda\ge 2^\omega$ and $T$ is complete superstable with DOP, then
    $E^2_{\lambda}\le_c\ \cong_T$. \qed
  \end{enumerate}
\end{theorem}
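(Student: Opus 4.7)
The plan is to build, for each theory class in the statement, a continuous map $F\colon 2^\k\to\Mod^\k_T$ with $\eta\,E^2_\l\,\xi \iff F(\eta)\cong F(\xi)$. The common framework is a Shelah-style Ehrenfeucht--Mostowski construction: fix a $\l$-saturated ``template'' model of $T$ of size $\k$ (whose existence uses $\k=\l^+=2^\l$ and $\l^{<\l}=\l$); pick a skeleton of indiscernibles indexed by a tree of height $\l$ with $\k$ many branches; and for each $\eta\in 2^\k$ assemble a submodel $M_\eta$ in which the bit $\eta(\a)$ at each $\l$-cofinal $\a<\k$ toggles a single local structural feature of the model attached to a piece of the skeleton indexed below $\a$. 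Continuity of $F$ is immediate, since each coordinate of $F(\eta)$ is decided by $\eta\rest\g$ for some $\g<\k$.

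For the forward direction, assume $\eta\,E^2_\l\,\xi$ and let $C$ be the $\l$-club on which $\eta$ and $\xi$ agree. I would run a back-and-forth between $M_\eta$ and $M_\xi$ along $C$: the arithmetic $\l^{<\l}=\l$ permits extending a partial isomorphism of size $<\l$ at each successor step, and at limit stages in $C$ of cofinality $\l$ the chain is patched by continuity of the construction together with the $\l$-saturation of the template. After $\k$ many steps the process yields an isomorphism of $M_\eta$ and $M_\xi$.

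The hard direction is the converse: if $\{\a<\k\mid \eta(\a)\ne\xi(\a)\}\cap S^\k_\l$ is stationary then $M_\eta\not\cong M_\xi$. One needs a stationary invariant of the isomorphism type that reads off $\eta$ modulo the $\l$-non-stationary ideal, and this is where the three theory-class hypotheses enter separately. For unstable $T$, exploit the order property: a formula $\varphi(\bar x,\bar y)$ defines long linear orders in models of $T$, and the stationary set of disagreements becomes visible to a hypothetical isomorphism via an Ehrenfeucht--Fraisse game of length $\l$. For superstable with OTOP, the invariant is the locus of omission of a controlled non-isolated type over tree-indexed parameters, which is preserved under isomorphism by the very definition of OTOP. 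For superstable with DOP, use the dimension of a regular type over an independent pair of elements and arrange that the dimension at the pair associated to $\a$ computes $\eta(\a)$; here $\l\ge 2^\o$ ensures that enough regular types over finite sets are available for the dimension-based invariant to be non-degenerate, which accounts for the extra hypothesis in clause~(ii). In each case any isomorphism would then force the invariants to agree on a club, contradicting stationarity of the set of disagreements. The main obstacle is precisely producing the converse-direction invariant in a uniform way; in practice each of the three cases is handled by a separate dedicated argument drawn from Shelah's classification-theoretic toolbox, which I would import rather than reprove.
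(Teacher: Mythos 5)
This statement is not proved in the paper at all: it is quoted verbatim as \cite[Thm~79]{FHK} and used as a black box (note the \qed in the statement itself), so there is no in-paper argument to compare your proposal against. What you have written is a plausible reconstruction of the strategy actually used in \cite{FHK} and in Shelah's work on strongly equivalent non-isomorphic models: build $\A_{F(\eta)}$ from a skeleton so that the isomorphism type depends only on $\eta$ modulo the $\l$-non-stationary ideal, prove isomorphism on a $\l$-club of agreement, and detect stationary disagreement by an invariant tailored to each theory class (order property for unstable $T$, type omission for OTOP, dimensions of regular types for DOP). At the level of a roadmap this is accurate.

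As a proof, however, it has no content beyond the roadmap, and the two places where you wave your hands are exactly where the theorem lives. First, the forward direction is not a routine back-and-forth: one must design the construction so that agreement of $\eta$ and $\xi$ on a $\l$-club actually forces $M_\eta\cong M_\xi$, and this requires the models to be assembled from pieces that are sufficiently saturated and homogeneous over small subsets (this is where $\k=\l^+=2^\l$ and $\l^{<\l}=\l$ are really used, not merely to ``extend a partial isomorphism of size $<\l$''). Second, and more seriously, the converse direction --- producing, for each of the three classes, a club-invariant of the isomorphism type that reads off $\eta(\a)$ at stationarily many $\a$ --- is the entire classification-theoretic content of the result; saying you would ``import rather than reprove'' each of the three dedicated arguments means the proposal reduces the theorem to itself. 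Your stated reason for the hypothesis $\l\ge 2^\o$ in clause (ii) is also a guess rather than an argument. Since the paper itself treats this as an external citation, the honest conclusion is that your proposal is a correct description of where the proof is to be found, not a proof.
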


\begin{corollary}[$V=L$]\label{cor:compths}
  Suppose that $\kappa$ is the successor of a regular
  uncountable cardinal~$\l$. If $T$ is a non-classifiable complete
  countable theory, then $\cong_{T}$ is a $\Sigma^1_1$-complete
  relation.
\end{corollary}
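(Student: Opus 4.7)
The plan is to split into cases according to Shelah's classification of $T$, in each case reducing $E^2_\lambda$ continuously to $\cong_T$ and then invoking Theorem \ref{thm:maincor} (applied with $\mu=\lambda$) to upgrade this to $\Sigma_1^1$-completeness of $\cong_T$. First I would unpack the cardinal arithmetic available under $V=L$: GCH yields $2^\lambda=\lambda^+=\kappa$, and since $\lambda$ is regular and uncountable, $\lambda^{<\lambda}=\lambda$ and $2^\omega=\omega_1\le\lambda$. These are exactly the hypotheses needed to apply both clauses of Theorem \ref{thm:FHK79}. Also, since $\lambda$ has uncountable cofinality, $\kappa$ is not the successor of an $\omega$-cofinal cardinal, so Corollary \ref{cor:stricstable} is available.

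For the case split, if $T$ is unstable or superstable with OTOP then Theorem \ref{thm:FHK79}(i) provides $E^2_\lambda\le_c \cong_T$, and if $T$ is superstable with DOP then Theorem \ref{thm:FHK79}(ii) does the same (using the bound $\lambda\ge 2^\omega$ established above). Since $\lambda$ is a regular cardinal strictly below $\kappa$, Theorem \ref{thm:maincor} tells us that $E^2_\lambda$ is $\Sigma_1^1$-complete, so composing the two reductions establishes the $\Sigma_1^1$-completeness of $\cong_T$ in these subcases. If instead $T$ is stable unsuperstable, I would invoke Corollary \ref{cor:stricstable} directly; its hypothesis is satisfied because $\kappa=\lambda^+$ is regular and not the successor of an $\omega$-cofinal cardinal.

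The main obstacle, such as it is, is purely bookkeeping: one must verify that the four alternatives used above (unstable, superstable with OTOP, superstable with DOP, stable unsuperstable) actually exhaust the non-classifiable complete countable theories in the sense intended by the statement. This is the standard usage in this line of work and matches the combined scope of Theorem \ref{thm:FHK79} and Corollary \ref{cor:stricstable}, so no additional reduction has to be engineered. Beyond this, the proof is a citation-and-composition argument and requires no new construction.
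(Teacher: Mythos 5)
Your proposal is correct and matches the paper's own proof, which likewise combines Theorem~\ref{thm:maincor} with Theorems~\ref{thm:FHK86} and~\ref{thm:FHK79} after checking that $V=L$ supplies the cardinal arithmetic hypotheses; routing the stable unsuperstable case through Corollary~\ref{cor:stricstable} is just that same combination packaged. No substantive difference.
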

\begin{proof}
  Follows from Theorems \ref{thm:maincor}, \ref{thm:FHK86},
  and~\ref{thm:FHK79}.
\end{proof}

By using yet another Theorem from~\cite{FHK} we obtain the following
dichotomy in~$L$. The class of $\Delta^1_1$ sets consists of sets $A$
such that both $A$ and the complement of $A$ are~$\Sii$~\cite{FHK}. 

\begin{theorem}[$V=L$]\label{thm:dichotomy}
  Suppose that $\kappa$ is the successor of a regular uncountable
  cardinal~$\l$. If $T$ is a countable first-order theory in a
  countable vocabulary, not necessarily complete, then one of the
  following holds:
  \begin{itemize}
  \item $\cong_T$ is $\Delta_1^1$.
  \item $\cong_T$ is $\Sigma_1^1$-complete.
  \end{itemize}
\end{theorem}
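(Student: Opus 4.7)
The plan is to reduce the not-necessarily-complete case to the complete case already handled by Corollary~\ref{cor:compths}. Let $(T_i)_{i\in I}$ enumerate the complete extensions of $T$ in the same countable vocabulary. Since each $T_i$ is a maximal consistent subset of the countable set of sentences, $|I|\le 2^{\aleph_0}$; under $V=L$ with $\k=\l^+$ and $\l$ uncountable regular, $2^{\aleph_0}=\aleph_1\le\l<\k$, so $|I|<\k$. Each $\Mod^\k_{T_i}$ is a countable intersection of Borel sets (one per sentence of $T_i$), hence Borel, and the family $(\Mod^\k_{T_i})_{i\in I}$ partitions $\Mod^\k_T$. I split into two cases.

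\textbf{Case 1: some $T_i$ is non-classifiable.} By Corollary~\ref{cor:compths}, $\cong_{T_i}$ is $\Sii$-complete, so it suffices to produce a Borel reduction $\cong_{T_i}\,\le_B\,\cong_T$. The case $T$ inconsistent is trivial; otherwise fix $\eta_0\in 2^\k\setminus\Mod^\k_T$ and define $f\colon 2^\k\to 2^\k$ by $f(\eta)=\eta$ if $\eta\in\Mod^\k_{T_i}$ and $f(\eta)=\eta_0$ otherwise. Then $f$ is Borel, and a short case analysis verifies $\eta\cong_{T_i}\xi\iff f(\eta)\cong_T f(\xi)$: the subtle case is $\eta\in\Mod^\k_{T_i}$, $\xi\in\Mod^\k_{T_j}$ with $j\ne i$, in which $f(\eta)\in\Mod^\k_T$, $f(\xi)\notin\Mod^\k_T$, and one uses that models of distinct complete theories are not isomorphic. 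Hence $\cong_T$ is $\Sii$-complete.

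\textbf{Case 2: every $T_i$ is classifiable.} Here we invoke the companion to Theorems~\ref{thm:FHK86} and~\ref{thm:FHK79} in~\cite{FHK}, which under the hypotheses $\k=\l^+$ and $\l^{<\l}=\l$ (both holding in $L$) yields that $\cong_{T_i}$ is $\Dii$ for every classifiable complete countable~$T_i$. Since $\cong_T\cap(\Mod^\k_{T_i})^2=\cong_{T_i}\cap(\Mod^\k_{T_i})^2$, we get
\[
\cong_T\ =\ \Cup_{i\in I}\bigl(\cong_{T_i}\cap(\Mod^\k_{T_i})^2\bigr)\ \cup\ (2^\k\setminus\Mod^\k_T)^2,
\]
a union of $|I|+1<\k$ many $\Dii$ sets, hence $\Sii$. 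The complement decomposes analogously as a fewer-than-$\k$ union of the Borel rectangles $\Mod^\k_{T_i}\times\Mod^\k_{T_j}$ for $i\ne j$, the $\Dii$ sets $(\neg\cong_{T_i})\cap(\Mod^\k_{T_i})^2$, and the two mixed rectangles involving $2^\k\setminus\Mod^\k_T$; each summand is $\Sii$, so the complement is $\Sii$. Thus $\cong_T$ is $\Dii$.

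The main obstacle is locating in \cite{FHK} the precise ``classifiable complete $\Rightarrow$ $\Dii$'' statement dual to the non-classifiable results of Corollaries~\ref{cor:stricstable}--\ref{cor:compths}; once that is in hand the argument is elementary $\k$-Borel bookkeeping, which works precisely because the cardinal arithmetic in $L$ forces $|I|<\k$.
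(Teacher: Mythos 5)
Your proposal follows essentially the same route as the paper: enumerate the at most $2^{\aleph_0}<\k$ complete extensions of $T$, invoke \cite[Thm~70]{FHK} (classifiable $\Rightarrow$ $\Dii$) together with Corollary~\ref{cor:compths} for the two horns, and in the non-classifiable case pull $\Sii$-completeness back along a Borel retraction onto $\Mod^\k_{T_i}$. The only cosmetic differences are that the paper splits on whether some $\cong_{T_\a}$ fails to be $\Dii$ (rather than on classifiability), and in the ``all classifiable'' case uses the single identity $\cong_T=\bigcap_{\a<2^\o}\cong_{T_\a}$ where you use a union decomposition plus a separate $\Sii$ analysis of the complement; both are routine $\k$-Borel bookkeeping and both are correct.

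One step of your Case~1 does fail in a degenerate situation: you ``fix $\eta_0\in 2^\k\setminus\Mod^\k_T$'', but this set can be empty. For instance if $T=\es$, or more generally if every sentence of $T$ holds in every $\L$-structure of size $\k$, then $\Mod^\k_T=2^\k$ while $T$ still has non-classifiable complete extensions, so Case~1 is precisely the case that applies and your map is undefined. The repair is what the paper does: send the elements outside $\Mod^\k_{T_i}$ to a fixed $\xi$ with $\A_\xi\not\models T_i$ (such a $\xi$ exists because $T_i$ is complete and the vocabulary admits non-elementarily-equivalent structures of size $\k$); the verification that $f(\eta)\cong_T f(\zeta)$ fails for $\eta\in\Mod^\k_{T_i}$ and $\zeta\notin\Mod^\k_{T_i}$ then uses that isomorphic structures are elementarily equivalent, and goes through whether or not $\xi\in\Mod^\k_T$. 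With that one-line change your argument is complete.
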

\begin{proof}
  For this proof it is useful to bare in mind how the isomorphism
  relation of a theory is defined, Definition~\ref{def:ISO}. Sometimes
  in literature it is defined differently, but these are mutually 
  Borel-bi-reducible (there is a Borel reduction both ways).
  
  It has been shown \cite[Thm~70]{FHK} that if a complete theory $T$
  is classifiable, then $\cong_T$ is $\Delta_1^1$.  So for a
  complete countable theory $T$ the result follows from
  Corollary~\ref{cor:compths}. Suppose $T$ is not a complete theory.  Let
  $\L$ be the vocabulary of $T$ and $\{T_\a\}_{\a<2^\o}$ be
  the set of all the complete theories in $\L$ that extend
  $T$. Notice that $\cong_T=\bigcap_{\a<2^{\o}}\cong_{T_\a}$,
  therefore if $\cong_{T_\a}$ is a $\Delta_1^1$ equivalence
  relation for all $\a<\k$, then so is~$\cong_T$ since $2^\o<\k$.
  
  Suppose $T'$ is a complete countable theory in $\L$ that
  extends $T$ such that $\cong_{T'}$ is not a $\Delta_1^1$
  equivalence relation. Then $T'$ is a non-classifiable countable
  theory. By Corollary \ref{cor:compths} $\cong_{T'}$ is a
  $\Sigma_1^1$-complete equivalence relation. We will show that
  $\cong_{T'}\ \le_B\ \cong_T$ which finishes the proof. Define
  $\F\colon\k^\k\to\k^\k$ by $$\F(\eta)=
  \begin{cases}
    \eta &\mbox{if } \A_\eta\models T'\\
    \xi & \mbox{otherwise. }
  \end{cases}
  $$
  where $\xi$ is a fixed element of $\k^\k$ such that
  $\A_\xi\not\models T'$. Since $T'$ extends $T$,
  $\eta\ \cong_{T'}\ \zeta \Leftrightarrow \F(\eta)\
  \cong_T\ \F(\zeta)$.
  To show that $\F$ is Borel, note
  that $$\F^{-1}([\eta\restriction\a])=
  \begin{cases}
    [\eta\restriction\a]\backslash
    \{\zeta\mid\A_\zeta\not\models T'\} &\text{if } \xi\not
    \in[\eta\restriction\a]\\
    \{\zeta\mid\A_\zeta\not\models T'\} \cup
    [\eta\restriction\a] &\text{if } \xi\in [\eta \restriction\a].
  \end{cases}
  $$
  Since $[\eta\restriction\a]$ is a basic open set and
  $\{\zeta\mid\A_\zeta\not\models T'\}$ is a Borel set,
  $[\eta\restriction\a]\backslash
  \{\zeta\mid\A_\zeta\not\models T'\}$
  and
  $[\eta\restriction\a]\cup \{\zeta\mid\A_\zeta\not\models
  T'\}$ are Borel sets.
\end{proof}

The dichotomy of Theorem~\ref{thm:dichotomy} is not provable in~ZFC.
In \cite{HSa,HSb} it was shown, assuming $\k$ is a successor and
$\k\in I[\k]$, that there is a stable unsuperstable countable theory
$T$ in a countable vocabulary such that $\cong_T$ is $\Borel^*$ (a
generalization of Borel sets to non-well-founded
trees~\cite{FHK,HalShe}). By Theorem~\ref{thm:FHK86} $\cong_T$ is not
$\Delta^1_1$, if $E^2_\o$ is not. It was proved in \cite{HK18} that
there is a model of ZFC where $\Borel^*\subsetneq \Sii$ (unlike in
$L$,~\cite{HK}), in which $E^2_\o$ is not~$\Dii$, and in which
$\k\in I[\k]$ for successor~$\k$.

%%%%%%%%%%%%%%%%%%%%%%%%%%%%%%%%%%%%%%%%%%

\section{The case $V\ne L$}

\subsection{$\Sii$-completeness of $\qo^{\NS}$ for weakly ineffable~$\k$}
\label{ssec:Ineffable}

In Section \ref{sec:Corollaries} we answered the questions
\cite[Q.~3.47]{KLLS}, \cite[Q.'s~11.3~and~11.4]{Luc} and
\cite[Q.~15]{FHK15} under $V=L$. We used Theorem~\ref{thm:maincor} as
the starting point. But what if $V\ne~L$? In this section we provide
further partial answers to \cite[Q.'s~11.3~and~11.4]{Luc} outside
of~$L$.  Recall that these questions ask ``Given a weakly compact
cardinal $\k$, are $\qo^{\NS}$ and $\qo_{\DLO}$ complete for $\Sii$
quasi-orders?''  Recall that $\qo_G$ is the embeddability of graphs,
Definition~\ref{def:ISO}. We will use the following theorem:

\begin{theorem}(\cite[Cor~10.24]{Luc}) \label{thm:biembedcomplete} If
  $\k$ is weakly compact, then both the quasi-order of embeddability
  and the equivalence relation of bi-embeddability of graphs,
  $\qo_{G}$ and $\approx_{G}$ respectively, are
  $\Sigma_1^1$-complete.
\end{theorem}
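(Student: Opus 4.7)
The plan is to reduce an arbitrary $\Sii$ quasi-order $Q$ on $2^\k$ to $\qo_G$. Since $Q$ is $\Sii$, fix a closed set $F\subseteq(2^\k)^2\times\k^\k$ whose projection onto the first two coordinates is $Q$, and extract a tree $T\subseteq(2^{<\k})^2\times\k^{<\k}$ such that $(\eta,\xi)\in Q$ iff the slice tree $T_{\eta,\xi}=\{s\in\k^{<\k}:(\eta\rest|s|,\xi\rest|s|,s)\in T\}$ has a cofinal branch. Under weak compactness of $\k$, the tree property will be the main technical tool for detecting the existence of such branches.

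The reduction $\eta\mapsto G_\eta$ would send each $\eta$ to a graph whose vertex set is built out of copies of $\k^{<\k}$ (representing partial attempts at $\xi$ together with a witness~$s$), with edges encoding the tree relation of $T_{\eta,\cdot}$, and with rigid marker-gadgets (finite graphs of pairwise distinct shapes, or paths of distinct prescribed lengths) attached to each vertex to record its level and coordinate. The two statements to verify are: (a) if a cofinal branch through $T_{\eta,\xi}$ exists, then one produces an embedding $G_\eta\hookrightarrow G_\xi$ by following the branch through the rigid skeleton; (b) conversely, any graph embedding $G_\eta\hookrightarrow G_\xi$ is forced by the markers to respect the tree-skeleton, and therefore gives a coherent family of partial approximations which, by the tree property of the weakly compact $\k$ applied to the auxiliary $\k$-tree of such approximations, extends to a cofinal branch of $T_{\eta,\xi}$ and hence witnesses $\eta\,Q\,\xi$.

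The main obstacle is calibrating the graph construction so that it is simultaneously (i) Borel, so that the assignment $\eta\mapsto G_\eta$ is a Borel map into $\Mod^\k_G$; (ii) rigid enough that no spurious graph embeddings can ``fake'' a branch through $T_{\eta,\xi}$, which is what makes direction~(b) go through; and (iii) flexible enough that every genuine branch yields a graph embedding, which is what direction~(a) requires. Rigidity is achieved by marker-gadgets that pin down the coordinate and level of each vertex, while flexibility is inherited from the tree structure itself. Weak compactness enters essentially in direction~(b): the extraction of a branch from a compatible system of partial embeddings is an instance of the tree property, and this step fails for general inaccessible $\k$.

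For the bi-embeddability assertion no further construction is needed. If $f$ is the Borel reduction of a $\Sii$ equivalence relation $E$, viewed as a $\Sii$ quasi-order, to $\qo_G$ produced above, then by symmetry of $E$ one has $\eta\,E\,\xi$ iff both $f(\eta)\qo_G f(\xi)$ and $f(\xi)\qo_G f(\eta)$, i.e.\ iff $f(\eta)\approx_G f(\xi)$. Hence the same map $f$ witnesses $E\le_B\approx_G$, and $\Sii$-completeness automatically transfers from $\qo_G$ to $\approx_G$.
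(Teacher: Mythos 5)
First, a point of comparison: the paper does not prove this statement at all --- it is imported verbatim from Motto Ros \cite[Cor.~10.24]{Luc} and used as a black box --- so you are in effect re-proving a substantial external theorem rather than reconstructing an argument from this paper. Your overall shape (code the witness tree of a $\Sii$ quasi-order into a graph, force embeddings to respect a rigid skeleton, extract a branch using weak compactness) is broadly the right family of ideas, and your closing observation is correct and costs nothing: a Borel reduction of a symmetric $\Sii$ relation $E$ to the quasi-order $\qo_{G}$ is automatically a reduction of $E$ to the symmetrization $\approx_{G}$, so completeness transfers.

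The genuine gap is in your direction (b), exactly where weak compactness is supposed to do its work. The tree property for a weakly compact $\k$ applies only to trees of height $\k$ all of whose levels have size $<\k$. Your ``auxiliary $\k$-tree of approximations'' consists of partial embeddings of initial segments of $G_\eta$ into $G_\xi$, i.e.\ essentially of nodes of the slice tree $T_{\eta,\xi}\subseteq\k^{<\k}$ together with witnessing data; already level $\a$ of $\k^{<\k}$ has size $\k^{|\a|}=\k$, and nothing in your construction prunes this down to levels of size $<\k$. This is not a cosmetic defect: there are trees of height $\k$ with $\k$-sized levels, nodes at every level, and no cofinal branch (e.g.\ closed bounded subsets of a suitable costationary set under end-extension), so the tree property simply does not yield the branch you need. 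The way weak compactness is actually consumed, both in \cite{Luc} and in the analogous arguments of this very paper (Theorem~\ref{thm:reductionreg}, Lemma~\ref{lem:bi-emb_to_Ereg}), is via $\Pii$-indescribability: one reflects a $\Pii$ statement such as ``there is no embedding of $\A_\eta$ into $\A_\xi$'' from $(V_\k,\in,A)$ down to stationarily many $\a<\k$, rather than running a branch-extraction argument on a tree of partial maps. A secondary but real problem is your choice of marker gadgets: paths of distinct lengths, and finite graphs generally, embed into one another, so under \emph{embeddability} (as opposed to isomorphism) they cannot rigidly pin down levels and coordinates; one needs a $\k$-sized family of gadgets pairwise non-embeddable in the relevant configuration, and building it while excluding all spurious embeddings is where most of the work in the cited proof actually lies.
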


\begin{definition}[Weakly compact diamond]\label{WCDiamond}
  Let $\k>\o$ be a cardinal. The \emph{weakly compact ideal} is
  generated by the sets of the form
  $\{\a<\k\mid \la V_\a,\in,U\cap V_\a\ra \models \lnot\f\}$ where
  $U\subseteq V_\k$ and $\f$ is a $\Pii$-sentence such that
  $\la V_\k,\in,U\ra \models \f$.  A set $A\subseteq\k$ is said to be
  \emph{weakly compact}, if it does not belong to the weakly compact
  ideal.  Note that $\k$ is weakly compact if and only if there exists
  $A\subseteq\k$ which is weakly compact, i.e. the weakly compact ideal
  is proper.  For weakly compact $S\subseteq \k$, the $S$-\emph{weakly
    compact diamond}, $\WC_\k(S)$, is the statement that there exists
  a sequence $(A_\a)_{\a<\k}$ such that for every $A\subseteq S$ the set
  $$\{\a<\k\mid A\cap\a=A_\a\}$$
  is weakly compact. We denote $\WC_\k=\WC_\k(\k)$.
\end{definition}

Weakly compact diamond was originally introduced in \cite{Sun} and
thoroughly analyzed in \cite{Hell}. In \cite{AHKM} it was used to
study the reducibility properties of $E^\k_{\reg}$. It has been sometimes
called the \emph{dual diamond}.

\begin{fact}\label{fact:WCD}
  If $\k$ is weakly ineffable (same as almost ineffable), then
  $\WC_\k$ holds.  See \cite{Hell} for proofs and references.
\end{fact}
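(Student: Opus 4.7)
The plan is to combine two aspects of $\k$: the size-$\k$ guessing provided by weak ineffability, and the $\Pii$-indescribability implied by weak compactness (which weak ineffability itself implies), in order to upgrade guessing to sets that are positive with respect to the weakly compact ideal.

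First I would invoke the standard equivalent diamond-style form of almost ineffability: a weakly ineffable $\k$ admits a sequence $(A_\a)_{\a<\k}$ with $A_\a\subseteq\a$ such that for every $A\subseteq\k$ the guessing set $G(A) = \{\a<\k : A\cap\a = A_\a\}$ has cardinality~$\k$. The equivalence with the primary ``thread'' form of weak ineffability is established by a diagonal argument: in the absence of such a sequence one could bookkeep counter-sets to each candidate, producing a master sequence which itself admits no thread, contradicting weak ineffability.

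Second, I would show that this same $(A_\a)$ is already a $\WC_\k$-sequence. Suppose, toward a contradiction, that some $A\subseteq\k$ has $G(A)$ lying in the weakly compact ideal, witnessed by $U\subseteq V_\k$ and a $\Pii$-sentence $\f$ with $\la V_\k,\in,U\ra\models\f$ and $\la V_\a,\in,U\cap V_\a\ra\models\neg\f$ for every $\a\in G(A)$. I would then form an expanded structure on $V_\k$ whose predicate jointly codes $U$, $A$, and the sequence $(A_\b)_{\b<\k}$, and construct a $\Pii$-sentence $\f^*$ asserting both $\f$ and that $A$ is guessed by the sequence. Using the $\Pii$-indescribability of $\k$ together with the characterization of the weakly compact ideal as the ideal of non-$\Pii$-reflecting sets, $\f^*$ reflects on a weakly-compact-positive, hence unbounded, set of $\a$; any such $\a$ would simultaneously lie in $G(A)$ and satisfy $\la V_\a,\in,U\cap V_\a\ra\models\f$, the desired contradiction.

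The main obstacle I expect is designing $\f^*$ so that its reflection to $V_\a$ entails $A\cap\a = A_\a$ at the stage $\a$ itself rather than only at predecessors: at stage $\a$ the guess $A_\a$ is the very datum being tested, so a naive ``predecessor'' coding of the sequence does not suffice. Hellsten's treatment in \cite{Hell} handles this by a shifted coding of the diamond sequence that makes $A_\a$ readable from $V_\a$ together with the interleaved unary predicate, combined with the precise identification of the weakly compact ideal with non-$\Pii$-reflecting sets; the bookkeeping is the technical heart of the argument.
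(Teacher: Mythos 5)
The paper itself gives no argument for this Fact --- it is a pure citation to Hellsten --- so your sketch has to stand on its own, and it does not: the difficulty you flag at the end of your second step is not bookkeeping but the entire mathematical content of the theorem. Your step 1 is fine (almost ineffable implies subtle, and the Jensen--Kunen least-counterexample recursion then yields a sequence $(A_\a)_{\a<\k}$ guessing every $A\subseteq\k$ on a stationary, hence size-$\k$, set). But step 2 cannot be completed by reflection. Whatever $\Pii$-sentence $\f^*$ you write over $\la V_\k,\in,U'\ra$, with $U'$ coding $U$, $A$ and $(A_\b)_{\b<\k}$, whether $\f^*$ holds in $\la V_\a,\in,U'\cap V_\a\ra$ is determined by $U\cap V_\a$, $A\cap\a$ and $(A_\b)_{\b<\a}$ alone; the guess $A_\a$ is simply absent from that structure, and can be replaced by any other subset of $\a$ without changing it. Hence no choice of $\f^*$ can force its reflection points into $G(A)=\{\a : A\cap\a=A_\a\}$ unless the sequence was \emph{constructed} so that $A_\a$ is computed from exactly the $V_\a$-data that governs reflection. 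A sequence produced by your step 1 was diagonalized against club-witnesses, not against $\Pii$-witnesses, so its guessing sets are stationary but there is no reason whatsoever for them to be positive for the much larger weakly compact ideal --- note that this ideal contains stationary sets such as $S^\k_\o$, since ``the ordinal height of the universe is regular'' is a $\Pii$-property of $V_\a$. In short, ``guessed on a set of size $\k$'' (or even ``guessed stationarily often'') is fully compatible with ``guessed only inside a set of the weakly compact ideal'', so your step 2 is trying to prove something that is false for the class of sequences your step 1 delivers.

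The actual argument (Sun, and Hellsten's presentation) runs the two ingredients in the opposite order. One builds $(A_\a)$ by recursion where at stage $\a$ the value $A_\a$ is chosen to defeat the least \emph{local $\Pii$-counterexample}, i.e.\ the least pair consisting of a $B\subseteq\a$ together with a $\Pii$-sentence and predicate on $V_\a$ witnessing that $B$'s guessing set below $\a$ avoids the corresponding reflection set. If the resulting sequence were not a $\WC_\k$-sequence, almost ineffability is applied \emph{to the sequence of least counterexamples} to thread them into a single global $(A,\f,U)$ with $\la V_\k,\in,U\ra\models\f$; only then is $\Pii$-indescribability invoked, once, to reflect $\f$ to a point of the thread, where the recursion has already arranged $A\cap\a=A_\a$, giving the contradiction. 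So almost ineffability must act on the $\Pii$-witnesses at the end of the proof; using it merely to produce a guessing sequence at the outset, and then finishing with weak compactness alone as you propose, discards exactly the strength that the theorem needs.
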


The proof of Lemma~\ref{lemma:WC_dual} can be found in \cite{AHKM} in
complete detail.

\begin{lemma}\label{lemma:WC_dual}
  Let $\k$ be a weakly compact cardinal.  The weakly compact diamond
  $\WC_\k$ implies the following principle $\WC^*_\k$.  There exists a
  sequence $\la f_\a \ra_{\a\in\reg(\k)}$ such that
  \begin{itemize}
  \item $f_\a\colon\a\to \a$,
  \item for all $g\in\k^\k$ and stationary $Z\subseteq \k$ the set
    $$\{\a\in \reg(\k)\mid g\rest\a=f_\a\land \a\cap Z\text{ is stationary}\}$$
    is stationary. \qed
  \end{itemize}
\end{lemma}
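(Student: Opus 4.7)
The plan is to feed $\WC_\k$ an encoding of an arbitrary $g\in\k^\k$, extract a guessing sequence of functions $f_\a\colon\a\to\a$, and combine this with the $\Pii$-reflection of stationarity that weakly compact cardinals enjoy. Fix a bijection $\pi\colon\k\to\k\times\k$ such that $C_\pi=\{\a<\k\mid \pi\rest\a\colon\a\to\a\times\a\text{ is a bijection}\}$ is a club, and for any $g\in\k^\k$ put $A_g=\{\pi^{-1}(\b,g(\b))\mid \b<\k\}\subseteq\k$, so that for $\a\in C_\pi$ the set $A_g\cap\a$ encodes $g\rest\a$. Let $(A_\a)_{\a<\k}$ witness $\WC_\k$. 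For $\a\in\reg(\k)\cap C_\pi$ such that $\pi[A_\a]\subseteq\a\times\a$ is the graph of a total function $\a\to\a$, let $f_\a$ be that function; otherwise let $f_\a\colon\a\to\a$ be identically~$0$.

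Now fix $g\in\k^\k$ and a stationary $Z\subseteq\k$. By $\WC_\k$ applied to $A_g$, $W_1=\{\a<\k\mid A_g\cap\a=A_\a\}$ is weakly compact. Both $C_\pi$ (a club) and the set of inaccessibles below~$\k$ (which lies in the weakly compact filter by $\Pii$-reflection of the inaccessibility of $\k$) belong to the weakly compact filter, so $W_1'=W_1\cap C_\pi\cap\reg(\k)$ is still weakly compact, and on $W_1'$ we have $f_\a=g\rest\a$ by the choice of coding. Moreover the assertion ``$Z$ is stationary'' is a $\Pii$-statement over $\la V_\k,\in,Z\ra$, so by $\Pii$-indescribability the set $W_2=\{\a<\k\mid \a\text{ is inaccessible and }Z\cap\a\text{ is stationary in }\a\}$ lies in the weakly compact filter.

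Since the weakly compact ideal is proper (as $\k$ is weakly compact), the intersection of a positive set with a filter element is still positive: $W_1'\cap W_2$ is weakly compact and hence stationary, and at every $\a$ in this intersection we simultaneously have $f_\a=g\rest\a$ and $Z\cap\a$ stationary in~$\a$. The main technical point is arranging the coding $\pi$ and the definition of $f_\a$ so that the restriction $A_g\cap\a$ really does decode to $g\rest\a$ on a club of $\a$'s, and identifying ``$Z\cap\a$ stationary in $\a$'' as precisely the content of $\Pii$-reflecting stationarity of $Z$ into $V_\a$; once these are set up, the conclusion is the standard diamond-plus-reflection combination.
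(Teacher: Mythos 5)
The paper does not actually prove this lemma; it defers to \cite{AHKM}. Your argument is the standard one (code $g$ as a subset of $\k$ via a pairing bijection, apply $\WC_\k$ to get a guessing sequence, and use $\Pi^1_1$-indescribability to put both ``$\a$ is regular'' and ``$Z\cap\a$ is stationary in $\a$'' into the weakly compact filter, then intersect a positive set with filter sets), and it is essentially correct. One small inaccuracy: on $W_1'=W_1\cap C_\pi\cap\reg(\k)$ you do \emph{not} yet get $f_\a=g\rest\a$, because for $\a\in C_\pi$ the set $A_g\cap\a$ decodes only to $\{(\b,g(\b))\mid \b<\a,\ g(\b)<\a\}$, which is a total function on $\a$ only when $\a$ is a closure point of $g$; at other $\a$ your definition sends $f_\a$ to the zero function. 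You implicitly acknowledge this at the end (``on a club of $\a$'s''), and the fix is immediate --- intersect further with the club of closure points of $g$, which lies in the weakly compact filter since the weakly compact ideal extends the non-stationary ideal --- but the claim as written in the body of your proof should be stated for that smaller set.
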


Let us introduce a version of $\WC^*_\k$ for graphs, denoted~$\WC^*_G$.  
Let $G_{<\k}=\{\G_\beta\}_{\beta<\k}$ be an enumeration of
all graphs with domain some ordinal $\a<\k$. For all $\a<\k$, let
$G_{<\a}=\{\G_\beta\}_{\beta<\a}$. 

The principle $\WC^*_G$ states that there exists a sequence
$\la f_\a\ra_{\a<\k}$ such that
\begin{itemize}
\item $f_\a\in (G_{<\a})^\a$,
\item if $(S,g)$ is a pair such that $S\subseteq \k$ is stationary and
  $g\in (G_{<\k})^\k$, the set
  $$\{\a\in \reg(\k)\mid g\restriction \a=f_\a\wedge S\cap\a\text{ is
    stationary}\}$$ is stationary.
\end{itemize}

\begin{fact}\label{fact:mahlodd}
  If $\WC^*_\k$ holds, then $\WC^*_G$ holds.
\end{fact}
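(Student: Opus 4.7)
The plan is to deduce $\WC^*_G$ from $\WC^*_\k$ by transferring the ordinal-valued guessing sequence to a graph-valued one through the enumeration $\{\G_\beta\}_{\beta<\k}$. Let $\la f'_\a\ra_{\a\in\reg(\k)}$ be a sequence witnessing $\WC^*_\k$, with $f'_\a\colon\a\to\a$. For each $\a\in\reg(\k)$, define $f_\a\in (G_{<\a})^\a$ by $f_\a(\beta)=\G_{f'_\a(\beta)}$; this is well defined because $f'_\a(\beta)<\a$, so $\G_{f'_\a(\beta)}\in G_{<\a}$. For $\a\notin\reg(\k)$ let $f_\a$ be defined arbitrarily in $(G_{<\a})^\a$, since the conclusion of $\WC^*_G$ only asks for stationarily many $\a\in\reg(\k)$.

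Now verify the guessing property. Fix a stationary $S\subseteq\k$ and a function $g\in(G_{<\k})^\k$. Define $g'\colon\k\to\k$ by letting $g'(\beta)$ be the unique $\g<\k$ such that $g(\beta)=\G_\g$. Applying $\WC^*_\k$ to the pair $(g',S)$, the set
$$T=\{\a\in\reg(\k)\mid g'\rest\a=f'_\a\ \land\ S\cap\a\text{ is stationary}\}$$
is stationary. The set $C=\{\a<\k\mid g'[\a]\subseteq\a\}$ is a club in $\k$ (it is the club of points closed under the function $g'$), so $T\cap C$ is stationary. For any $\a\in T\cap C$ and any $\beta<\a$, we have $g'(\beta)<\a$, and hence
$$g(\beta)=\G_{g'(\beta)}=\G_{f'_\a(\beta)}=f_\a(\beta),$$
so $g\rest\a=f_\a$ while $S\cap\a$ remains stationary. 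This shows that $\la f_\a\ra_{\a<\k}$ witnesses~$\WC^*_G$.

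There is no substantial obstacle, only a bookkeeping point: one must make sure the target of $f_\a$ lies in $G_{<\a}$ rather than in $G_{<\k}$, which is why the intersection with the club $C$ of ordinals closed under $g'$ is needed; after that intersection, the equality $g\rest\a=f_\a$ automatically takes place in $(G_{<\a})^\a$. The same coding idea would work for any reasonable class of structures enumerable in order type $\k$, so the argument is essentially a uniform translation between the ordinal and graph versions of weakly compact diamond.
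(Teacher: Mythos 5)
Your proof is correct and follows essentially the same route as the paper: transfer the $\WC^*_\k$ witness through the enumeration $\{\G_\beta\}_{\beta<\k}$ and pull the guessing property back along the coding $g\mapsto g'$. The only difference is your extra intersection with the club of closure points of $g'$, which is actually redundant: on the stationary set $T$ one already has $g'\rest\a=f'_\a$ with $f'_\a\colon\a\to\a$, so $g'[\a]\subseteq\a$ holds automatically there.
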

\begin{proof}
  Let $\la\bar{f}_\a\ra_{\a<\k}$ be a sequence that witnesses
  $\WC^*_\k$. Define the
  sequence $\la f_\a\ra_{\a<\k}$ by
  $f_\a(\beta)=\G_{\bar{f}_\a(\beta)}$.
  
  To show that $\la f_\a\ra_{\a<\k}$ witnesses $\WC^*_G$, let
  $g\in (G_{<\k})^\k$ be any function and $S\subseteq \k$ a 
  stationary set. There is a function $\bar{g}\colon\k\to \k$ such
  that $g(\a)=\G_{\bar{g}(\a)}$. Because of $\WC^*_\k$ we
  know that the set
  $$\{\alpha\in \reg(\k)\mid \bar{g}\restriction
  \alpha=\bar{f}_\alpha\wedge Z\cap\alpha\mbox{ is stationary}\}$$
  is stationary. From the definitions of $\la f_\a\ra_{\a<\k}$ and $\bar{g}$ 
  it follows that the set
  $$\{\alpha\in \reg(\k)\mid g\restriction \alpha=f_\alpha\wedge
  Z\cap\alpha\mbox{ is stationary}\}$$ is stationary.
\end{proof}

\begin{definition}\label{def:closure_points}
  A \emph{closure point} of a function $s\colon\k\to\k$ is an ordinal
  $\a<\k$ such that for all $\b<\a$ we have $s(\b)<\a$. The set of all
  closure points of $s$ is a club.
\end{definition}

\begin{theorem}\label{thm:reductionreg}
  If $\k$ is weakly compact and $\WC^*_G$ holds, then $\qo^{\reg}$ as
  well as $\qo^{\NS}$ are $\Sigma_1^1$-complete.
\end{theorem}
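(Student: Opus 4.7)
The plan is to combine Theorem~\ref{thm:biembedcomplete} with a Borel reduction $\qo_G \le_B \qo^{\reg}$; together with Fact~\ref{reg-red-NS} this gives the $\Sii$-completeness of both $\qo^{\reg}$ and $\qo^{\NS}$. Let $\la f_\a \ra_{\a<\k}$ be the sequence witnessing $\WC^*_G$. For $\a \in \reg(\k)$ call $f_\a$ \emph{coherent} if $\dom(f_\a(\b)) = \b$ for every $\b<\a$ and $f_\a(\b_1) = f_\a(\b_2) \cap \b_1^2$ whenever $\b_1 < \b_2 < \a$; in that case put $H_\a = \Cup_{\b<\a} f_\a(\b)$, which is a graph on $\a$. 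Define $F\colon \Mod^\k_G \to 2^\k$ by
\[
F(\G)(\a) = 1 \iff \a \in \reg(\k), \ f_\a \text{ is coherent, and } H_\a \hookrightarrow \G\rest \a.
\]
Since testing for an embedding between two graphs of domain $<\k$ is a $<\k$-sized Boolean combination of basic open conditions on $\G\rest\a$, the map $F$ is Borel.

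For the forward direction, suppose $h\colon \G_1 \hookrightarrow \G_2$. The club $C$ of $\a<\k$ closed under $h$ consists of ordinals for which $h\rest\a$ embeds $\G_1\rest\a$ into $\G_2\rest\a$. Hence for each $\a \in C$, $F(\G_1)(\a)=1$ gives $H_\a \hookrightarrow \G_1\rest\a \hookrightarrow \G_2\rest\a$, so $F(\G_2)(\a)=1$. Therefore $(F(\G_1)^{-1}\{1\}\setminus F(\G_2)^{-1}\{1\})\cap \reg(\k)$ is non-stationary and $F(\G_1)\qo^{\reg} F(\G_2)$.

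The backward direction is the heart of the matter. Assume $\G_1 \not\hookrightarrow \G_2$. The sentence ``$\G_1 \not\hookrightarrow \G_2$'' is $\Pii$ in the parameters $(\G_1,\G_2)$ over $V_\k$, so weak compactness (Definition~\ref{WCDiamond}) puts the set
\[
S_{\mathrm{refl}} = \{\a \in \reg(\k) : \G_1\rest\a \not\hookrightarrow \G_2\rest\a\}
\]
in the weakly compact filter, hence makes it stationary. Apply $\WC^*_G$ to the function $g_{\G_1}(\a) = \G_1\rest\a$ together with $S = S_{\mathrm{refl}}$: the resulting stationary set $T\subseteq\reg(\k)$ consists of $\a$ such that $f_\a = g_{\G_1}\rest\a$ (so $f_\a$ is coherent with $H_\a = \G_1\rest\a$, forcing $F(\G_1)(\a)=1$) and such that $S_{\mathrm{refl}}\cap\a$ is stationary in $\a$. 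A closure-point argument then forces $\a$ itself to lie in $S_{\mathrm{refl}}$: if instead an embedding $h\colon \G_1\rest\a \hookrightarrow \G_2\rest\a$ existed, the club of closure points of $h$ in $\a$ would supply a club of $\b<\a$ with $\G_1\rest\b \hookrightarrow \G_2\rest\b$, contradicting stationarity of $S_{\mathrm{refl}}\cap\a$ in $\a$. Consequently $H_\a = \G_1\rest\a \not\hookrightarrow \G_2\rest\a$, so $F(\G_2)(\a)=0$, and the stationary set $T$ witnesses $F(\G_1) \not\qo^{\reg} F(\G_2)$.

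I expect the main obstacle to be precisely this final ``absorption'' step. Plain $\Pii$-reflection only furnishes the stationary set $S_{\mathrm{refl}}$ of $\a$ at which the non-embeddability reflects, but the reduction needs some such $\a$ also to satisfy $f_\a = g_{\G_1}\rest\a$. It is exactly the ``$S\cap\a$ stationary in $\a$'' clause in the definition of $\WC^*_G$, married with the monotonicity of embeddings under restriction to closure points, that promotes ``stationary below $\a$'' into genuine membership of $\a$ in $S_{\mathrm{refl}}$, thereby aligning the diamond-style guessing with the reflection.
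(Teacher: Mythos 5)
Your argument is correct, and it rests on the same three pillars as the paper's proof: Theorem~\ref{thm:biembedcomplete}, $\Pi^1_1$-reflection of non-embeddability to a stationary set of regular $\a$, and the $\WC^*_G$ guessing married to the ``$S\cap\a$ stationary'' clause. The packaging, however, is genuinely different. The paper factors the reduction through an auxiliary quasi-order $Q$ on $(G_{<\k})^\k$ (where $f\le_Q g$ iff $f(\b)\qo g(\b)$ for club-many $\b$), first sends $\eta$ to the sequence $\a\mapsto \A_{\eta\rest\a}$, and then sets $\F(f)(\a)=0$ iff $f\rest\a\le^\a_Q f_\a$; there the diamond guesses the \emph{target} sequence $g\rest\a$, and non-reduction at $\a$ is immediate because a club witnessing $f\rest\a\le^\a_Q g\rest\a$ would have to meet the stationary set of bad coordinates below $\a$ --- no further embedding argument is needed at that stage. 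You instead build a single map, let the diamond guess the initial segments of the \emph{left-hand} graph (so $H_\a=\G_1\rest\a$ and $F(\G_1)(\a)=1$ comes for free from reflexivity), and pay for this with the extra ``absorption'' step: a closure-point argument \emph{locally at} $\a$ showing that stationarily many $\b<\a$ with $\G_1\rest\b\not\hookrightarrow\G_2\rest\b$ preclude an embedding $\G_1\rest\a\hookrightarrow\G_2\rest\a$. That step is sound (the club of closure points of a putative embedding would be disjoint from $S_{\mathrm{refl}}\cap\a$), and you correctly identified it as the crux; the paper avoids needing it by keeping the final comparison at the level of the coordinatewise quasi-order rather than of actual embeddings at $\a$. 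Your version is a more direct one-step reduction; the paper's isolates the combinatorics in the cleaner intermediate statement $Q\le_B\qo^{\reg}$. Two small points to tidy: restrictions of coded structures behave well only on the club $C_\pi$ of Definition~\ref{def:clubstr}, so $F$ should be defined via $\A_{\eta\rest\a}$ for $\a\in C_\pi$ with a default value elsewhere; and the degenerate $\a=\o$ should be discarded from your stationary set $T$, which is harmless since $T\setminus(\o+1)$ remains stationary.
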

\begin{proof}
  The claim for $\qo^{\NS}$ follows from Fact~\ref{reg-red-NS} once we
  prove the claim for $\qo^{\reg}$.
  By Theorem \ref{thm:biembedcomplete} it is enough to show that
  $\qo_{G}\ \le_B\ \qo^{\reg}$.  For all $K,H\in G_{<\k}$ we
  write $K\qo H$ if $K$ is embeddable to $H$.  Let us denote
  by $Q$ the quasi-order $((G_{<\k})^\k, \le_Q)$, where $f\le_Q g$
  holds if there is a club $C$ such that for all $\a\in C$,
  $f(\a)\qo g(\a)$ holds.
  
  Note that every $\G_\a$ equals some $\A_{p}$, $p\in \k^{<\k}$, where
  $\A_p$ is as defined in Definition~\ref{def:clubstr}.  Vice versa,
  if $\eta\in \Mod^\k_G$ (i.e. is a code for a graph, that is,
  $\A_\eta$ is a graph, Definition~\ref{def:coding}), then for every
  $\a<\k$ there is $\b<\k$ such that $\A_{\eta\rest\a}=\G_\b$.  Let
  $H$ be the graph with domain $2$ and no edges. Define
  $F\colon \Mod^\k_G\to (G_{<\k})^\k$ by $$F(\eta)(\a)=
  \begin{cases}
    \A_{\eta\restriction\a} &\mbox{if } \a\in C_\pi\\
    H & \mbox{otherwise. }
  \end{cases}
  $$
  where $C_\pi$ is as in Definition~\ref{def:clubstr}.
   \begin{claim}
    $\eta \qo_{G} \xi$ if and only if
    $F(\eta)\le_Q F(\xi)$.
  \end{claim}
  
  \begin{proof}
    Let us show that if $\eta \qo_{G} \xi$, then
    $F(\eta)\le_Q F(\xi)$. Suppose $\eta\qo_{G}\xi$, then
    there is an embedding $f\colon\k\to\k$  of $\A_\eta$ to $\A_\xi$.
    Let $D$ be the set of all closure points of~$f$
    (Definition~\ref{def:closure_points}). Since $D$ is a club,
    $f\restriction\a$ is an embedding of $\A_{\eta\restriction\a}$ to
    $\A_{\xi\restriction\a}$, for all $\a\in D\cap C_\pi$. We conclude
    that $F(\eta)\le_Q F(\xi)$.  Let us show that if
    $\eta \not\qo_{G}\xi$, then
    $F(\eta)\not\le_Q F(\xi)$. Suppose
    $\eta\not\qo_{G}\xi$. The property
    \begin{center}
      ``(there is no embedding of $\A_\eta$ to $\A_\xi$) $\land$ ($\k$ is
      regular) $\land$ ($C_\pi$ is unbounded)''
    \end{center}
    \noindent is a $\Pi_1^1$-property of the structure $(V_\k,\in,A)$, where
    $A=(\eta\times\{0\})\cup(\xi\times\{1\})\cup (C_\pi\times \{2\})$.
    Since $\k$ is weakly compact, there is stationary many
    ordinals $\gamma$ such that the above property holds with $\eta$ and
    $\xi$ replaced by $\eta\rest\gamma$ and $\xi\rest\gamma$ as well as $\k$
    replaced by $\g$, i.e.
    \begin{center}
      ``(there is no embedding of $\A_{\eta\rest\g}$ to
      $\A_{\xi\rest\gamma}$) $\land$ ($\g$ is regular) $\land$
      ($C_\pi\cap \g$ is unbounded in $\g$)''.
    \end{center}
    We conclude that there are stationary many ordinals $\gamma$ such
    that $F(\eta)(\gamma)\not\qo F(\xi)(\gamma)$, hence
    $F(\eta)\not\le_Q F(\xi)$.
  \end{proof}
  Let $\la f_\a\ra_{\a<\k}$ be a sequence that witnesses $\WC^*_G$.
  For all $\a\in \reg(\k)$ define the relation $\le^\a_Q$ on
  $(G_{<\k})^\a$ by: $f\le^\a_Q g$ if there is a club $C\subseteq \a$
  such that for all $\beta\in C$, $f(\beta)\qo g(\beta)$
  holds. Notice that since the intersection of two clubs is a club,
  then $\le^\a_Q$ is a quasi-order.  Define the map
  $\F\colon (G_{<\k})^\k\to 2^\k$ by $$\F(f)(\a)=
  \begin{cases}
    0 &\mbox{if } f\restriction\a\le^\a_Q f_\a\\
    1 & \mbox{otherwise }
  \end{cases}
  $$
  \begin{claim}
    $f\le_Q g$ if and only if
    $\F(f)\qo^{\reg}\F(g)$.
  \end{claim}
  
  \begin{proof}
    Let us show that if $f\le_Q g$, then
    $\F(f)\qo^{\reg}\F(g)$. Suppose $f\le_Q g$,
    then there is a club $C\subseteq\k$ such that for all $\a\in C$,
    $f(\a)\qo g(\a)$. Therefore, for all
    $\a\in \lim(C)\cap \reg(\k)$ it holds that
    $f\restriction\a\le^\a_Q g\restriction\a$. Now if
    $\a\in \lim(C)\cap \reg(\k)$ is such that $\F(g)(\a)=0$, then
    $g\restriction\a\le^\a_Q f_\a$, so $f\restriction\a\le^\a_Q f_\a$
    and $\F(f)(\a)=0$. We conclude that
    $(\F(f)^{-1}[1]\backslash \F(g)^{-1}[1])\cap
    \reg(\k)$
    is non-stationary. Hence
    $\F(f)\qo^{\reg}\F(g)$.  Let us show that if
    $f\not\le_Q g$, then
    $\F(f)\not\qo^{\reg}\F(g)$. Suppose that
    $f\not\le_Q g$, then there is a stationary set $S\subseteq \k$
    such that for all $\a\in S$, $f(\a)\not\qo g(\a)$. Because
    of $\WC^*_G$ we know that the set
    $$A=\{\a\in \reg(\k)\mid g\restriction\a=f_\a\land S\cap \a\text{
      is stationary}\}$$
    is a stationary set. Therefore, for all $\a\in A$,
    $\F(g)(\a)=0$, and for all $\beta\in S\cap\a$,
    $f(\beta)\not\qo g(\beta)$. Since for all $\a\in A$,
    $g\restriction\a=f_\a$, and $S\cap\a$ is stationary, we conclude
    that $f\restriction\a\not\le_Q^\a f_\a$ holds for all $\a\in A$.
    Hence, for all $\a\in A$, $\F(g)(\a)=0$ and
    $\F(f)(\a)=1$. We conclude that
    $A\subseteq (\F(f)^{-1}[1]\backslash
    \F(g)^{-1}[1])\cap\ \reg(\k)$,
    and since $A$ is stationary,
    $\F(f)\not\qo^{\reg}\F(g)$.
  \end{proof}
  Clearly $\F\circ F\colon\Mod^\k_G\to 2^\k$ is a
  Borel-reduction of $\qo_{G}$ to $\qo^{\reg}$.
\end{proof}

\begin{theorem}\label{main}
  If $\k$ is weakly ineffable, then $\qo^{\NS}$ is
  $\Sigma_1^1$-complete.
\end{theorem}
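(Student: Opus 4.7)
The plan is to chain together the previously established implications. This theorem appears to be the capstone of Section~\ref{ssec:Ineffable}: all the substantive work has been done in Theorem~\ref{thm:reductionreg}, Fact~\ref{fact:WCD}, Lemma~\ref{lemma:WC_dual}, and Fact~\ref{fact:mahlodd}. What remains is just a short assembly argument.

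First, I would note that weakly ineffable cardinals are weakly compact; this is a standard fact (weak ineffability strengthens weak compactness in the usual large cardinal hierarchy), so the hypothesis of Theorem~\ref{thm:reductionreg} concerning weak compactness of $\k$ is automatic. Next, by Fact~\ref{fact:WCD}, weak ineffability of $\k$ yields $\WC_\k$. Then Lemma~\ref{lemma:WC_dual} upgrades $\WC_\k$ to $\WC^*_\k$, and Fact~\ref{fact:mahlodd} in turn upgrades $\WC^*_\k$ to $\WC^*_G$.

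At this point both hypotheses of Theorem~\ref{thm:reductionreg} are in place: $\k$ is weakly compact and $\WC^*_G$ holds. Applying that theorem directly gives $\Sii$-completeness of $\qo^{\NS}$ (and also of $\qo^{\reg}$), which is exactly the statement to be proved.

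There is no real obstacle here, since each link in the chain is either cited or already established in the paper. The only thing worth being careful about is to explicitly remark that weak ineffability implies weak compactness so that Theorem~\ref{thm:reductionreg} is applicable; beyond that the proof is a single sentence citing Fact~\ref{fact:WCD}, Lemma~\ref{lemma:WC_dual}, Fact~\ref{fact:mahlodd}, and Theorem~\ref{thm:reductionreg} in order.
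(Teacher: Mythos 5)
Your proposal is correct and follows exactly the same chain as the paper's own proof, which simply cites Fact~\ref{fact:WCD}, Lemma~\ref{lemma:WC_dual}, Fact~\ref{fact:mahlodd}, and Theorem~\ref{thm:reductionreg}. Your explicit remark that weak ineffability implies weak compactness (needed to invoke Theorem~\ref{thm:reductionreg}) is a small point the paper leaves implicit, but otherwise the arguments coincide.
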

\begin{proof}
  Follows from Fact \ref{fact:WCD}, Lemma \ref{lemma:WC_dual},
  Fact~\ref{fact:mahlodd}, and Theorem~\ref{thm:reductionreg}.
\end{proof}

Thus, the only case concerning \cite[Q.~11.4]{Luc} that is still open
is the case where $V\ne L$ and $\k$ is a weakly compact, but not 
weakly ineffable cardinal. For example the first weakly compact
is such~\cite[Lemma~1.12]{friedman2001subtle}. For successor cardinals, we know from \cite{FWZ} that it can be forced the relation $E_{NS}^2$ to be a $\Delta_1^1$ equivalence relation. So it is consistently true that $\qo^{\NS}$ is not $\Sigma_1^1$-complete.

\subsection{$\Sii$-completeness of $\cong_{\DLO}$ and $\cong_{G}$ for
  weakly compact~$\k$}
\label{ssec:SiiCompIsoDLO}

In this section we prove:

\begin{theorem}\label{thm:IsoisSii}
  Suppose that $\k$ is weakly compact. Then the isomorphism relation
  on dense linear orders is $\Sii$-complete.
\end{theorem}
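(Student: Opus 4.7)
The plan is to reduce the bi-embeddability of graphs $\approx_G$ --- which is $\Sii$-complete on a weakly compact $\k$ by Theorem~\ref{thm:biembedcomplete} --- to the isomorphism of dense linear orders $\cong_{\DLO}$. A Borel reduction $F\colon\Mod^\k_G\to\Mod^\k_{\DLO}$ with $\Gamma_1\approx_G\Gamma_2\iff F(\Gamma_1)\cong_{\DLO}F(\Gamma_2)$ will suffice.

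The construction of $F$ splits into two stages. First, each graph $\Gamma\in\Mod^\k_G$ is encoded as a $\k$-sized linear order $L_\Gamma$ via a continuous, embeddability-preserving map: each vertex is replaced by a labelled linearly ordered block and edges are coded by distinctive order patterns between blocks, in the spirit of (but dual to) the construction in Theorem~\ref{thm:ToGraphs}. One thus obtains $\Gamma_1\qo_G\Gamma_2\iff L_{\Gamma_1}\qo L_{\Gamma_2}$, and hence $\Gamma_1\approx_G\Gamma_2\iff L_{\Gamma_1}\approx L_{\Gamma_2}$. Second, each linear order $L$ of size $\k$ is densified to a DLO $D(L)$ by a Baumgartner-style shuffle, analogous to the construction underlying Theorem~\ref{thm:ReducionNStoDLO}: roughly, $\k$-many copies of $L$ are densely interleaved inside a fixed ambient DLO. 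The essential claim to establish is that $L_1\approx L_2\iff D(L_1)\cong D(L_2)$.

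The nontrivial direction is $(\Rightarrow)$. Given embeddings $f\colon L_1\hookrightarrow L_2$ and $g\colon L_2\hookrightarrow L_1$, one constructs an isomorphism $D(L_1)\to D(L_2)$ by a back-and-forth argument of length $\k$. At each stage $\alpha<\k$ the partial isomorphism built so far has size $<\k$, and weak compactness is used to show that any prescribed new point can be accommodated. The obstruction to extending would be expressible as a $\Pii$-statement about $(V_\k,\in,U)$ for a suitable predicate $U$ coding $L_1,L_2,f,g$ and the shuffle; by $\Pii$-reflection it would reflect to stationarily many $V_\alpha$, producing a failure of bi-embeddability at level $\alpha$ that contradicts the dense shuffling property of $D(L_i)$. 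The direction $(\Leftarrow)$ is routine: an isomorphism $D(L_1)\cong D(L_2)$ restricts through canonical ``pure copies'' of $L_i$ inside $D(L_i)$ to yield mutual embeddings of $L_1$ and $L_2$.

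The hard part is the $(\Rightarrow)$ direction: calibrating the shuffle so that the $\k$-long back-and-forth can actually be carried out and the $\Pii$-reflection argument goes through. One needs the ``extension at a new point'' to be phrased by a first-order (or $\Pii$) condition on a suitable predicate, while simultaneously encoding enough of the bi-embeddability data for the reflected structure to produce a usable contradiction. Once this is achieved, verifying that $F=D\circ L$ is Borel is straightforward, since both stages are definable by absolute conditions with parameters read off the input structure.
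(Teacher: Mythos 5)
Your overall target is right (reduce $\approx_G$ to $\cong_{\DLO}$ using Theorem~\ref{thm:biembedcomplete}), but the central step of your construction --- the claim that $L_1\approx L_2$ implies $D(L_1)\cong D(L_2)$ for a Baumgartner-style shuffle $D$ --- is exactly where the argument breaks, and the justification you offer does not repair it. A back-and-forth of length $\k$ does not get stuck at the ``accommodate one new point'' step you propose to handle by $\Pii$-reflection; it gets stuck at limit stages of cofinality $<\k$, where the partial isomorphism built so far determines cuts in $D(L_1)$ and $D(L_2)$ whose cofinality and coinitiality characters need not match, so that the union of the earlier stages cannot be continued at all. Weak compactness gives reflection of $\Pii$ statements from $(V_\k,\in,U)$ to stationarily many $V_\a$; it does not supply a mechanism for sealing such gaps, and the contradiction you aim for (``a failure of bi-embeddability at level $\a$'') is not the negation of ``the partial isomorphism extends'': bi-embeddability of $L_1$ and $L_2$ reflects to initial segments only on a club of closure points of the two embeddings, and even there it says nothing about extendibility of a given partial map. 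The $(\Leftarrow)$ direction is also not routine, since an isomorphism of dense shuffles has no reason to respect any designated ``pure copies'' of the $L_i$. In short, converting bi-embeddability of $\k$-sized linear orders into isomorphism of DLOs by a direct order-theoretic construction is the hard part, and your proposal assumes it rather than proves it.

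The paper avoids this entirely by interposing the invariant $E^\k_{\reg}$: Lemma~\ref{lem:bi-emb_to_Ereg} sends $\eta$ to the function $\a\mapsto g_\a(\eta)$ recording the $\approx_{G}^\a$-class of $\A_{\eta\rest\a}$, using closure points of the two embeddings for the positive direction and $\Pii$-reflection of ``there is no embedding'' only for the negative direction --- a setting where reflection genuinely suffices, because one only needs stationarily many $\a$ at which the values differ, not a globally defined isomorphism. The step from $E^\k_{\reg}$ to $\cong_{\DLO}$ is then imported from \cite[Thm~3.9]{AHKM}, where the DLOs are built so that isomorphism tracks agreement of the coding functions on a club of regular cardinals. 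If you want to salvage your approach you would essentially have to reprove that result; as written, the proposal has a genuine gap.
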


Before proving
Theorem~\ref{thm:IsoisSii}, we first prove the following:

\begin{lemma}\label{lem:bi-emb_to_Ereg}
  If $\k$ is weakly compact, then the bi-embeddability of graphs
  $\approx_G$ is reducible to $E^\k_{\reg}$
  (Definition~\ref{def:ISO}).
\end{lemma}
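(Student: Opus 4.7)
The plan is to define a Borel map $F\colon \Mod^\k_G \to \k^\k$ by setting
\[
F(\eta)(\alpha) \;=\; \min\{\gamma<\k \mid \G_\gamma \approx_G \A_{\eta\rest\alpha}\}
\]
for $\alpha \in C_\pi$, and $F(\eta)(\alpha)=0$ otherwise, where $(\G_\gamma)_{\gamma<\k}$ is the enumeration of graphs with domain some ordinal below $\k$ fixed in Section~\ref{ssec:Ineffable}. The minimum is well-defined since $\A_{\eta\rest\alpha}$ itself appears in the enumeration. Intuitively, $F(\eta)(\alpha)$ serves as a canonical code for the bi-embeddability class of $\A_{\eta\rest\alpha}$, so that $F(\eta)(\alpha)=F(\xi)(\alpha)$ captures $\A_{\eta\rest\alpha}\approx_G\A_{\xi\rest\alpha}$.

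To show that $F$ is Borel, one uses that $\k$ is weakly compact, hence strongly inaccessible, so $\k^{<\k}=\k$. For fixed $\gamma,\alpha<\k$, the statement ``$\G_\gamma$ embeds into $\A_{\eta\rest\alpha}$'' is the union, over fewer than $\k$ candidate functions $f\colon \dom(\G_\gamma)\to\alpha$, of the clopen condition ``$f$ is an embedding'', which depends on fewer than $\k$ bits of $\eta$. Such a ${<}\k$-union of clopen sets is open. Bi-embeddability is the intersection of two such open conditions, hence Borel; then $\{\eta \mid F(\eta)(\alpha)=\gamma_0\}$ is a Borel intersection with ${<}\k$ complements of Borel sets, hence Borel. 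Since basic open sets in $\k^\k$ depend on ${<}\k$ coordinates, this is enough for Borelness of $F$.

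For the reduction property itself, the forward direction uses a closure-point argument: if $f,g$ witness $\A_\eta\approx_G\A_\xi$ then their common closure points form a club $C$, and for every $\alpha\in C\cap C_\pi$ the restrictions $f\rest\alpha$, $g\rest\alpha$ bi-embed $\A_{\eta\rest\alpha}$ and $\A_{\xi\rest\alpha}$, giving $F(\eta)(\alpha)=F(\xi)(\alpha)$ on a club, hence on a club of regulars. For the contrapositive, assume $\A_\eta\not\approx_G\A_\xi$; without loss of generality $\A_\eta\not\qo_G\A_\xi$, which is a $\Pi_1^1$ statement over $(V_\k,\in,\eta,\xi)$. By $\Pi_1^1$-indescribability of the weakly compact $\k$, the set of $\alpha<\k$ for which the analogous statement reflects down to $(V_\alpha,\in,\eta\rest\alpha,\xi\rest\alpha)$ (i.e.\ $\A_{\eta\rest\alpha}\not\qo_G \A_{\xi\rest\alpha}$) lies in the weakly compact filter; intersecting with the club $C_\pi$ and with $\reg(\k)$ (the weakly compact filter concentrates on inaccessibles) yields a stationary set of regular $\alpha$ on which $F(\eta)(\alpha)\ne F(\xi)(\alpha)$, so $(F(\eta),F(\xi))\notin E^\k_{\reg}$. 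The main technical point is keeping the min-operation Borel, for which strong inaccessibility of $\k$ is crucial; the rest is a standard combination of closure points and $\Pi_1^1$-reflection at a weakly compact cardinal.
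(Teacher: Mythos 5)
Your proposal is correct and follows essentially the same route as the paper: code the bi-embeddability class of $\A_{\eta\rest\alpha}$ by an ordinal (the paper posits an abstract $g_\alpha$ where you take the minimal index in the enumeration $(\G_\gamma)_{\gamma<\k}$, which is just a canonical choice of the same thing), use closure points of the two embeddings for the forward direction, and $\Pi^1_1$-indescribability for the backward direction. The only substantive addition is your explicit verification that the map is Borel via ${<}\k$-unions of clopen conditions, a point the paper leaves implicit.
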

\begin{proof}
  Let $C_\pi$ be the club as in Definition \ref{def:clubstr} and for all
  $\a\in C_\pi$ define the relation $\approx_{G}^\a$ as follows. For
  all $\eta,\xi\in \Mod^\k_G$, let $\eta\ \approx_{G}^\a\ \xi$, if
  $\A_{\eta\rest\a}$ is embeddable in
  $\A_{\xi\rest\a}$ and $\A_{\eta\rest\a}$ is
  embeddable in $\A_{\xi\rest\a}$ (Definition~\ref{def:clubstr}).
  
  There are at most $\k$ many equivalence classes of $\approx_G^\a$,
  so let $g_\a\colon \Mod^\k_G\to\k$ be a function with the property
  that for all $\eta,\xi\in \Mod^\k_G$ we have $g_\a(\eta)=g_\a(\xi)$
  if and only if \mbox{$\eta\approx_G^\a \xi$}.
  
  Define the reduction $\F\colon \Mod^\k_G\to \k^\k$
  by
  $$\F(\eta)(\a)=
  \begin{cases}
    g_\a(\eta) &\mbox{if } \a\in C_\pi\\
    0 & \mbox{otherwise.}
  \end{cases}
  $$ 
  Let us show that if $\eta\approx_{G}\xi$, then
  $(\F(\eta),\F(\xi))\in E^\k_{\reg}$. Suppose that $\eta\approx_{G}\xi$.
  Then there are embeddings $F_1\colon \k\to\k$ and $F_2\colon\k\to\k$
  from $\A_\eta$ to $\A_\xi$, and from $\A_\xi$ to $\A_\eta$
  respectively. Let $D_1$ and $D_2$ be the sets of all closure points
  (Definition~\ref{def:closure_points}) of $F_1$ and $F_2$
  respectively. These are closed unbounded sets in~$\k$.  Then for all
  $\a\in D_1\cap D_2\cap C_\pi$, $\A_{\eta\restriction\a}$ and
  $\A_{\xi\restriction\a}$ are bi-embeddable. Hence for all
  $\a\in D_1\cap D_2\cap C_\pi$, $\F(\eta)(\a)=\F(\xi)(\a)$. We
  conclude that $(\F(\eta),\F(\xi))\in E^\k_{\reg}$.
  
  Let us show that if $\eta\not\approx_{G}\xi$, then $\F(\eta)$ and
  $\F(\xi)$ are not $E^\k_{\reg}$-equivalent. Suppose that
  $(\eta,\xi)\notin\ \approx_{G}$, without loss of generality, suppose
  that there is no embedding of $\A_\eta$ into
  $\A_\xi$. The property
  
  \textit{There is no embedding of $\A_\eta$ to
    $\A_\xi\ \wedge\ \k$ is regular $ \wedge\ C_\pi$ is
    unbounded}
  
  is a $\Pi_1^1$-property of the structure $(V_\k,\in,A)$, where
  $A=(\eta\times\{0\})\cup(\xi\times\{1\})\cup ( C_\pi\times \{2\})$.
  Since $\k$ is weakly compact, there are stationary many ordinals
  $\gamma<\k$ such that $C_\pi\cap\gamma$ is unbounded,
  $\gamma\in C_\pi$, $\gamma$ is regular, and \textit{there is no
    embedding of $\A_{\eta\restriction\gamma}$ to
    $\A_{\xi\restriction\gamma}$}. We conclude that there are
  stationary many regular cardinals $\gamma$ with
  $\F(\eta)(\gamma)\neq\F(\xi)(\gamma)$, hence
  $\eta$ and $\xi$ are not $E^\k_{\reg}$-equivalent.
\end{proof}

\begin{corollary}\label{cor:WCEkappareg}
  If $\k$ is weakly compact, then $E^\k_{\reg}$ is $\Sii$-complete.
\end{corollary}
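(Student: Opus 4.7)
The plan is essentially to chain together the two preceding results by transitivity of Borel-reducibility, with a brief check that $E^\k_{\reg}$ itself is $\Sii$ (otherwise ``$\Sii$-complete'' would be vacuous or ill-defined in the intended sense).

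First I would observe that $E^\k_{\reg}$ is a $\Sii$ equivalence relation: the statement ``$\{\a<\k\mid \eta(\a)\ne\xi(\a)\}\cap\reg(\k)$ is non-stationary'' asserts the existence of a club disjoint from a Borel set, so it is the projection of a Borel subset of $(\k^\k)^2\times\k^\k$ (via a code for the club), which is $\Sii$ by definition.

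Next, by Theorem~\ref{thm:biembedcomplete}, under the assumption that $\k$ is weakly compact the bi-embeddability relation $\approx_G$ on graphs is $\Sii$-complete, i.e.\ every $\Sii$ equivalence relation on $\k^\k$ Borel-reduces to $\approx_G$. By Lemma~\ref{lem:bi-emb_to_Ereg}, again using weak compactness, there is a Borel reduction of $\approx_G$ to $E^\k_{\reg}$. Composing Borel reductions yields a Borel reduction of any $\Sii$ equivalence relation to $E^\k_{\reg}$, establishing $\Sii$-completeness.

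There is no real obstacle here, since all the work has already been done in Theorem~\ref{thm:biembedcomplete} (invoked from the literature) and in the preceding Lemma~\ref{lem:bi-emb_to_Ereg}; the only point worth noting explicitly is that the composition of two Borel functions is Borel, which is standard in this setting because the preimage of an open set under the composed map is Borel by pulling the Borel structure back through each map in turn.
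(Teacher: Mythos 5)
Your proposal is correct and follows exactly the paper's argument: the corollary is obtained by composing the Borel reduction of Lemma~\ref{lem:bi-emb_to_Ereg} with the $\Sii$-completeness of $\approx_G$ from Theorem~\ref{thm:biembedcomplete}. The extra remark that $E^\k_{\reg}$ is itself $\Sii$ is harmless and already covered by the paper's observation that all relations of Definition~\ref{def:ISO} are $\Sii$.
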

\begin{proof}
  Follows from Theorem \ref{thm:biembedcomplete} and
  Lemma~\ref{lem:bi-emb_to_Ereg}.
\end{proof}

Now we can prove Theorem~\ref{thm:IsoisSii}:

\begin{proof}[Proof of Theorem~\ref{thm:IsoisSii}]
  By \cite[Thm 3.9]{AHKM} we have $E^\k_{\reg}\ \le_c\ \cong_{\DLO}$,
  so the result follows from Corollary~\ref{cor:WCEkappareg}
\end{proof}

By Theorem~\ref{thm:ToGraphs} we get the following corollary to
Theorem~\ref{thm:IsoisSii}:

\begin{corollary}
  Suppose that $\k$ is weakly compact. Then the isomorphism relation
  on graphs is $\Sii$-complete. \qed
\end{corollary}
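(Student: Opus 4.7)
The plan is to chain together two results already established in the paper: Theorem~\ref{thm:IsoisSii} gives that $\cong_{\DLO}$ is $\Sii$-complete under the weak compactness assumption, and Theorem~\ref{thm:ToGraphs} provides a continuous function $F\colon \Mod^\k_{\DLO}\to \Mod^\k_G$ that preserves isomorphism (and embeddability, though only the former is needed here). Composition of a continuous reduction with a $\Sii$-complete equivalence relation yields a $\Sii$-complete equivalence relation, so nothing further is required in principle.

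Concretely, I would first recall that $\cong_G$ is $\Sii$: this follows from the general fact noted in Section~\ref{sec:Preliminaries} that all the relations of Definition~\ref{def:ISO} are $\Sii$. Then, given any $\Sii$ equivalence relation $E$ on some Borel subspace of $\k^\k$, Theorem~\ref{thm:IsoisSii} supplies a Borel reduction $g\colon X\to \Mod^\k_{\DLO}$ witnessing $E\le_B\ \cong_{\DLO}$. Applying $F$ from Theorem~\ref{thm:ToGraphs}, which satisfies $\eta\cong_{\DLO}\xi\iff F(\eta)\cong_G F(\xi)$ and is continuous hence Borel, the composition $F\circ g\colon X\to \Mod^\k_G$ is Borel, and for all $x,y\in X$ we have
\[
x\mathrel{E}y\ \iff\ g(x)\cong_{\DLO} g(y)\ \iff\ F(g(x))\cong_G F(g(y)).
\]
Thus $E\le_B\ \cong_G$, so $\cong_G$ is $\Sii$-complete.

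The only minor bookkeeping point to mention is that $\cong_G$ in Definition~\ref{def:ISO} has been extended to all of $2^\k$ by identifying non-models of the graph theory, so the reduction target is indeed a relation on $2^\k$, consistent with the definition of $\Sii$-completeness for equivalence relations. There is no real obstacle here: the work has been done in Theorem~\ref{thm:IsoisSii} (via the weak compactness argument through $E^\k_{\reg}$ and bi-embeddability) and in Theorem~\ref{thm:ToGraphs} (the explicit graph gadget), and this corollary is simply the one-line composition of those two facts.
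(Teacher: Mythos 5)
Your proof is correct and follows exactly the paper's route: the paper states this corollary with no written proof beyond the remark that it follows from Theorem~\ref{thm:ToGraphs} applied to Theorem~\ref{thm:IsoisSii}, which is precisely the composition you spell out. The bookkeeping point you raise (extending the target relation to all of $2^\k$ and routing non-models appropriately) is the only detail the paper leaves implicit, and you handle it adequately.
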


%%%%%%%%%%%%%%%%%%%%%%%%%%%%%

%\bibliographystyle{amsalpha}
%\bibliography{refs}
\providecommand{\bysame}{\leavevmode\hbox to3em{\hrulefill}\thinspace}
\providecommand{\MR}{\relax\ifhmode\unskip\space\fi MR }
% \MRhref is called by the amsart/book/proc definition of \MR.
\providecommand{\MRhref}[2]{%
  \href{http://www.ams.org/mathscinet-getitem?mr=#1}{#2}
}
\providecommand{\href}[2]{#2}

\end{document}